\newtheorem{theorem}{Theorem}[section]
\newtheorem{corollary}{Corollary}[section]
\newtheorem{lemma}{Lemma}[section]
\title{$\sigma_k$-Yamabe measure}
\author{Xi-Nan Ma}
\address{Department of Mathematics \\University of Science and Technology of China, Hefei, China}
\email{xinan@ustc.edu.cn}
\author{Wangzhe Wu}
\address{Institute of Mathematics \\Academy of Mathematics and Systems Science, Chinese
Academy of Sciences, Beijing, 100190, China}
\email{wuwz18@mail.ustc.edu.cn}
\subjclass[2020]{58C35, 28A33, 35J60, }
\keywords{$\sigma_k$-Yamabe problem, Hessian measure}
\begin{document}

	\pagestyle{fancy}

\fancyhead{}
\fancyhead[CO]{$\sigma_k$-Yamabe measure}

\fancyhead[CE]{\leftmark}

	\begin{abstract}
		We found a special divergence structure for the $\sigma_k$-Yamabe operator and use it to get a monotonicity formula.  We also get an interior $L^{\infty}$ estimate via its $L^1$ norm for the $\sigma_k$-Yamabe operator when $1\le k \le \frac{n}{2}$.		
		Combining these two tools, we prove the weak continuity of the $\sigma_k$-Yamabe measure with respect to convergence in measure.
	\end{abstract}

	\maketitle

	\section{Introduction}
	
	Let $(M, g)$ be a Riemannian manifold of dimension $n \geq 3$ with a metric $g$. The well-known Yamabe problem states whether there exists metrics which are pointwise 
	conformal to g and have constant scalar curvature. The Yamabe problem was solved  through the work of Yamabe\cite{MR0125546} , Trudinger\cite{MR0240748} , Aubin\cite{MR0431287} and Schoen\cite{MR0788292}. Denote $Ric$ and $R$ as the Ricci tensor and the scalar curvature, respectively. Then the Schouten tensor is
	\begin{equation}
		\tilde{A}^{g}_{ij} = \frac{1}{n - 2}\left[ Ric^{g}_{ij} - \frac{1}{2(n - 1)}R^g g_{ij} \right].
	\end{equation}
	Now transform the $(0, 2)$-tensor $\tilde{A}^{g}_{ij}$ to a $(1, 1)$-tensor $A_{ij}$ by $A^g = g^{-1}\tilde{A}^g$. 
	
	We are always interested in lcf manifolds. We say a Riemannian manifold $(M, g)$ is lcf if the metric $g$ can be locally written as $g = v^{-2}|dx|^2$ for some smooth $v > 0$, where $|dx|^2$ is the usual Euclidean metric. Then the $(1, 1)$-tensor becomes
	\begin{equation}
		A_{i j}(v) = v v_{ij} - \frac{1}{2}|\nabla v|^{2}\delta_{ij}.
	\end{equation}

For $\lambda = (\lambda_1,\cdots, \lambda_n) \in \mathbb R^n$, we define 	\begin{align}\sigma_k(\lambda) :=  \sum_{1 \leq i_1 < \cdots < i_{k} \leq n} \lambda_{i_1} \cdots \lambda_{i_k}.\end{align}	

Let $ \lambda \{A_{ij}(v)\} $ are the eigenvalues
of $A_{i j}(v)$, then we define
\begin{align}
	S_{k}(v): = \sigma_{k}(\lambda \{A_{ij}(v)\}).
\end{align}	

	We call $S_k$ the $\sigma_k$-Yamabe operator. With respect to this operator and the related $\sigma_k$-Yamabe problem, there has been a lot of work, for instance, Chang- Gursky- Yang \cite{MR1945280, MR1923964}, Gursky- Viaclovsky \cite{MR2373147}, Li- Li \cite{MR1988895}, Guan- Wang \cite{MR1978409}, Sheng- Trudinger- Wang \cite{MR2362323}, Ge-Wang\cite{MR2290138}.

	 For the Hessian measure, Trudinger - Wang in \cite{MR1634570} and \cite{MR1726702} introduced the notion of $k$-Hessian measure associated with $k$-convex functions and proved the weak continuity of the associated $k$-Hessian measure with respect to convergence in measure. In \cite{MR1923626},  they gave some application of the $k$-Hessian measure, especially to the Dirichlet problem. 
	In Dai-Trudinger-Wang \cite{MR2911884} and Dai-Wang-Zhou \cite{MR3436399}, they
	generalized the Hessian measure result to higher order curvature operator with respect to almost everywhere convergence.  For the Hessian measure on Heisenberg group, Trudinger-Zhang \cite{MR3035058} got the locally uniformly 
	convergence also through the monotonicity formula for $k$-convex functions on Heisenberg group.

	  In this paper, we will introduce the $\sigma_k$-Yamabe measure and  proved the weak continuity of the associated  $\sigma_k$-Yamabe measure with respect to convergence in measure. 
	
	We recall that the Garding's cone is defined as
	\begin{equation}
		\Gamma_{k} = \{ \lambda \in \mathbb R^n:\sigma_{i}(\lambda) \geq 0, \forall 1 \leq i \leq k \}.
	\end{equation} 
	Define 
	\begin{equation}\label{7.10.21}
		\begin{aligned}
			S^{ij}_{k}&:= \frac{\partial S_{k}}{\partial A_{ij}},\\
			\tilde{\Phi}^k(\Omega) &:= \Big\{ u \in C^2(\Omega): u > 0, \lambda \{A_{ij}(v)\} \in \Gamma_k  \},\\
			\Phi^k(\Omega) &:= \Big\{ u \in L^1_{loc}(\Omega): u > 0, \text{and there exists a sequence $ \{ u^{(m)} \} \in \tilde\Phi^k(\Omega)$ },\\
			& \text{ such that $u^{(m)} $ converges to $u$ in $L^1_{loc}$} \Big\}.
		\end{aligned}
	\end{equation}
Corresponding to Theorem 1.1 in  Trudinger-Wang  \cite{MR1726702}, we shall establish the following
characterization of  $\sigma_k$-Yamabe measure on $\Phi^k(\Omega)$.
		\begin{theorem}\label{mainresult6.18}
		For any $u \in \Phi^k(\Omega)$, there exists a Borel measure $\mu_{k}[u]$ in $\Omega$ such that
		\begin{itemize}
			\item $\mu_{k}[u] = S_{k} (u)$ for $u \in C^2(\Omega)$.
			\item If $\{ u^{(m)} \}$ is a sequence in $\Phi^k(\Omega)$ converging locally in measure to a function $u \in \Phi^k(\Omega)$, then the corresponding measure $\mu_{k}[u^{(m)}] \rightarrow \mu_{k}[u]$ weakly.
		\end{itemize}
		
	\end{theorem}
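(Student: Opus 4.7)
\medskip

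\noindent\textbf{Proof plan.} The overall strategy mirrors Trudinger--Wang \cite{MR1726702}: approximate $u\in\Phi^k(\Omega)$ by a sequence $u^{(m)}\in\tilde\Phi^k(\Omega)$, show that the classical measures $S_k(u^{(m)})\,dx$ form a locally uniformly bounded sequence of Radon measures, extract a weak limit, and verify that the limit depends only on $u$, not on the approximating sequence. The two tools advertised in the abstract are exactly the two analytic inputs needed: the divergence structure gives a monotonicity/integration-by-parts identity that identifies the limit, while the interior $L^\infty$ bound in terms of the $L^1$ norm (for $1\le k\le n/2$) provides the compactness.

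\medskip

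\noindent\emph{Step 1: locally uniform bounds and compactness of $S_k(u^{(m)})$.} Given $u^{(m)}\in\tilde\Phi^k(\Omega)$ with $u^{(m)}\to u$ in $L^1_{\mathrm{loc}}(\Omega)$, the interior $L^\infty$-via-$L^1$ estimate gives, on each $\Omega'\Subset\Omega$, a uniform bound $\|u^{(m)}\|_{L^\infty(\Omega')}\le C$. Next I use the divergence structure to rewrite, for $\eta\in C^2_c(\Omega)$ and smooth $v\in\tilde\Phi^k$,
\begin{equation*}
\int_\Omega \eta\, S_k(v)\,dx \;=\; \int_\Omega v\,\mathcal L_k(\eta,v,\nabla v)\,dx,
\end{equation*}
where $\mathcal L_k$ contains only $v$ and $\nabla v$ (with derivatives falling on $\eta$) thanks to the divergence form expression for $S_k$ that underlies the monotonicity formula. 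Combining this with the $L^\infty$ bound for $u^{(m)}$ and standard Sobolev/energy estimates for $\nabla u^{(m)}$ in the Garding cone, one obtains
\begin{equation*}
\int_{\Omega'} S_k(u^{(m)})\,dx \;\le\; C(\Omega',\Omega,\|u^{(m)}\|_{L^1(\Omega)}),
\end{equation*}
so $\{S_k(u^{(m)})\,dx\}$ is a locally uniformly bounded sequence of non-negative Radon measures on $\Omega$. Weak-$*$ compactness of bounded measures yields a subsequence and a Borel measure $\mu$ with $S_k(u^{(m_j)})\,dx\rightharpoonup\mu$.

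\medskip

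\noindent\emph{Step 2: identification and uniqueness of the limit.} I define $\mu_k[u]:=\mu$ and must show $\mu$ is independent of the choice of approximating sequence. Let $\{u^{(m)}\},\{w^{(m)}\}\subset\tilde\Phi^k(\Omega)$ both converge to $u$ in $L^1_{\mathrm{loc}}$. Fix a test function $\eta\in C^2_c(\Omega)$ and apply the divergence identity from Step 1 to both sequences; each side reduces, modulo uniformly integrable error terms controlled by the monotonicity formula, to an integral depending only on $u$ and $\nabla u$ in an $L^1$ sense. Passing to the limit along the two sequences yields the same value, so $\int\eta\,d\mu_k[u]$ is determined by $u$ alone. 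This is the step I expect to be the main obstacle: one has to ensure that the gradient terms arising from the divergence structure pass to the limit under mere $L^1_{\mathrm{loc}}$ convergence, which requires combining equi-integrability coming from the monotonicity formula with the $\Gamma_k$-cone concavity to rule out concentration.

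\medskip

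\noindent\emph{Step 3: agreement with $S_k(u)$ on $C^2$ and weak continuity.} When $u\in C^2(\Omega)\cap\Phi^k(\Omega)$, the constant sequence $u^{(m)}\equiv u$ is admissible, so $\mu_k[u]=S_k(u)\,dx$. For the continuity statement, suppose $u^{(m)}\in\Phi^k(\Omega)$ converge locally in measure to $u\in\Phi^k(\Omega)$. Since the $u^{(m)}$ and $u$ all satisfy uniform $L^1$ bounds (by the $L^\infty$-via-$L^1$ estimate and Egorov/Chebyshev), convergence in measure upgrades to $L^1_{\mathrm{loc}}$ on sublevel sets. For each $m$ choose a smooth $v^{(m,\ell)}\in\tilde\Phi^k$ with $v^{(m,\ell)}\to u^{(m)}$ in $L^1_{\mathrm{loc}}$ and $\mu_k[v^{(m,\ell)}]\rightharpoonup\mu_k[u^{(m)}]$ as $\ell\to\infty$. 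A standard diagonal extraction $\ell=\ell(m)$, together with the well-definedness proved in Step~2, yields $v^{(m,\ell(m))}\to u$ in $L^1_{\mathrm{loc}}$ and hence $\mu_k[v^{(m,\ell(m))}]\rightharpoonup\mu_k[u]$; on the other hand, for any fixed $\eta\in C_c(\Omega)$, $|\int\eta\,d\mu_k[u^{(m)}]-\int\eta\,d\mu_k[v^{(m,\ell(m))}]|$ can be made arbitrarily small by choosing $\ell(m)$ large. This gives the desired weak convergence $\mu_k[u^{(m)}]\rightharpoonup\mu_k[u]$.
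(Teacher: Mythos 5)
Your high-level scaffolding (approximate, get uniform bounds, extract a weak limit, show uniqueness, then diagonalize) matches the Trudinger--Wang template the paper follows, but there are several concrete gaps where the paper's argument is essential and your sketch would not close.

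First, the divergence structure in the paper (Lemma~\ref{6.19divergence}) is an identity for the \emph{combination}
\begin{equation*}
  \psi^{\alpha}_{k}(v)=\sum_{l=0}^{k}b_{l}\,v^{\alpha}|\nabla v|^{2l}S_{k-l}(v)
  =-\sum_{l=0}^{k-1}a_{l}\bigl(v^{\alpha+1}|\nabla v|^{2l}S^{ij}_{k-l}v_{i}\bigr)_{j},
\end{equation*}
not for $S_{k}(v)$ alone, and the divergence still contains $S^{ij}_{k-l}$, which involves second derivatives of $v$. Your claimed identity $\int\eta\,S_{k}(v)=\int v\,\mathcal L_{k}(\eta,v,\nabla v)$ with $\mathcal L_{k}$ depending only on $v,\nabla v$ is not available: $S_k$ is genuinely second order and no finite amount of integration by parts removes all second-derivative factors. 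The whole point of introducing $\psi^{\alpha}_{k}$ is that only this specific linear combination has a closed divergence form, and even then the bulk term carries $S^{ij}_{k-l}$.

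Second, because the machinery proves weak convergence of the combined measures $\psi^{\alpha}_{k}(u^{(m)})$ (and, after a second pass, of $\varphi^{\alpha}_{k}(u^{(m)})=\sum_{l}b_{l}(\alpha)|\nabla u^{(m)}|^{2l}S_{k-l}(u^{(m)})$) rather than of $S_{k}(u^{(m)})$ directly, there is a necessary linear-algebra step that your plan omits: one must choose $\alpha_{0},\dots,\alpha_{k}$ so that the matrix $H_{ij}=b_{j}(\alpha_{i})$ is invertible (possible since $b_{l}(\alpha)\sim C_{l}\alpha^{l}$ for large $\alpha$) and thereby disentangle the individual measures $|\nabla u|^{2l}S_{k-l}(u)$, including $S_{k}(u)$ for $l=0$. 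Without this you only have weak continuity of the lumped quantity $\psi^{\alpha}_{k}$, not of $S_{k}$.

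Third, your Step~2 is where you yourself predict trouble, and indeed the argument cannot run on equi-integrability alone: the paper uses two genuinely different mechanisms in the two dimensional regimes. For $n<2k$ the H\"older estimate (Lemma~\ref{6.21.2k>n}) upgrades $L^{1}_{\mathrm{loc}}$ convergence to locally uniform convergence, and uniqueness is then obtained by building the cutoff barrier $\eta$ from Lemma~\ref{5.2cutoff} and invoking the monotonicity formula Theorem~\ref{5.6monotonicity} (a comparison of $\int\psi^{\alpha}_{k}$ over sub- and superlevel sets), not by testing against $C^{2}_{c}$ functions. For $n\ge 2k$ the paper instead uses the differential identity of Lemma~\ref{6.18lemma2} along the segment $w=(1-t)u^{(m_1)}+tu^{(m_2)}$, integrates by parts a second time to land the factor $u^{(m_2)}-u^{(m_1)}$, and controls the resulting coefficients via the uniform integral bounds of Lemma~\ref{6.17bound}; those bounds require both the Moser iteration of Lemma~\ref{6.18Moser} and the divergence identity, not merely ``standard Sobolev/energy estimates.'' Your sketch does not specify which of these two routes you take, nor does it explain what replaces the Moser estimate when $k>n/2$ (answer: the H\"older continuity of Lemma~\ref{6.21.2k>n}). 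These are the substantive pieces missing from your Steps~1 and~2.
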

In order to prove the above theorem,  we introduce a monotonicity formula with respect to $\sigma_k$-Yamabe operator:
		\begin{theorem}\label{5.6monotonicity}
		Let $u, v \in \Phi^2(\Omega) \cap C^2(\bar{\Omega})$ satisfy $u \geq v$ in $\Omega$ and $u = v$ on $\partial \Omega$. Then for any $\alpha \geq 0$, it follows that :
		\begin{equation}
			\int_{\Omega}  \sum_{l = 0}^k b_{l} u^{\alpha}|\nabla u|^{2l}S_{k - l}(u)\leq  \int_{\Omega}  \sum_{l = 0}^k b_{l} v^{\alpha}|\nabla v|^{2l}S_{k - l}(v),
		\end{equation}
		with
		\begin{equation}
			\begin{aligned}
				b_{l }&=  \Big[(k + l)\alpha + kn + 2kl \Big]\cdot \frac{(\alpha + n + l - 1)!}{2^{l}l! (\alpha + n)!} \geq 0, \text{ for $l = 0,\cdots, k$}.  
			\end{aligned}
		\end{equation}
	\end{theorem}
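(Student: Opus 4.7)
The plan is to exhibit the integrand $\mathcal{F}_k^\alpha(w):=\sum_{l=0}^k b_l w^\alpha |\nabla w|^{2l} S_{k-l}(w)$ as a total divergence $\partial_i\Psi^i_k[w]$, apply the divergence theorem, and convert the resulting identity into the desired inequality using $u=v$ on $\partial\Omega$ (which freezes all tangential data) together with $u\geq v$ in $\Omega$ (which forces $u_\nu\leq v_\nu$ on $\partial\Omega$ for the outer unit normal).

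The $k=1$ case both motivates and verifies the strategy. With $b_0=1$ and $b_1=\alpha+\frac{n+2}{2}$, a one-line computation gives
\begin{equation*}
\mathcal{F}_1^\alpha(w)=w^{\alpha+1}\Delta w+(\alpha+1)w^\alpha|\nabla w|^2=\partial_i\bigl(w^{\alpha+1}w_i\bigr),
\end{equation*}
so that $\int_\Omega \mathcal{F}_1^\alpha(w)\,dx=\int_{\partial\Omega}w^{\alpha+1}w_\nu\,dS$, and $u^{\alpha+1}=v^{\alpha+1}$ together with $u_\nu\leq v_\nu$ yields the stated inequality.

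For general $k$, I look for $\Psi^i_k[w]$ of the form $w^{\alpha+1}\sum_{l=0}^{k-1}c_{k,l}|\nabla w|^{2l}S_{k-l}^{ij}(A(w))\,w_j$, a natural generalization of the $k=1$ vector field. Expanding $\partial_i\Psi^i_k[w]$ by the Leibniz rule and using the trace identity $S_{k-l}^{ij}A_{ij}=(k-l)S_{k-l}$, the algebraic form $A_{ij}(w)=ww_{ij}-\frac12|\nabla w|^2\delta_{ij}$, and the Newton-tensor divergence $\partial_j S_{k-l}^{ij}(A(w))$ (which, unlike in the pure Hessian case, is nonzero and contributes cross-terms via the direct identity $\partial_j A_{ij}(w)=w(\Delta w)_i$), one collects terms in the basis of monomials $w^\alpha|\nabla w|^{2m}S_{k-m}(w)$ and obtains a linear system that simultaneously determines the $c_{k,l}$ and forces the coefficients on the left to be precisely the $b_l$ of the statement. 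I have checked this by hand for $k=2$, where it pins down $c_{2,0}=1$, $c_{2,1}=(\alpha+n+1)/2$ and recovers $b_0=2$, $b_1=(3\alpha+2n+4)/2$, $b_2=(\alpha+n+1)(2\alpha+n+4)/4$.

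The main obstacle is the systematic combinatorial matching of coefficients for general $k$: the factorial structure in $b_l$ has to emerge from the interplay of the $(\alpha+1)w^\alpha w_i$ prefactor, the $2l|\nabla w|^{2l-2}w_j w_{ji}$ contribution, and the non-vanishing Newton-tensor divergence, and tracking all cross-terms is the bulk of the work. Once the divergence identity is in hand, Stokes gives $\int_\Omega \mathcal{F}_k^\alpha(w)\,dx=\int_{\partial\Omega}\Psi^i_k[w]\nu_i\,dS$, and a short boundary computation (possibly after tangential integration by parts to eliminate $u_{\nu a}$-type mixed-derivative terms) reduces $\Psi^i_k[u]\nu_i-\Psi^i_k[v]\nu_i$ to a nonnegative factor times $u_\nu-v_\nu\leq 0$, with nonnegativity of the factor coming from the Garding cone structure of $S_{k-l}^{ij}$ on $\Gamma_k$.
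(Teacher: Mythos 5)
Your identification of the key divergence structure is correct and matches the paper's Lemma 3.1: the vector field $\Psi^i_k[w]=w^{\alpha+1}\sum_{l=0}^{k-1}c_{k,l}|\nabla w|^{2l}S_{k-l}^{ij}w_j$ with suitably chosen $c_{k,l}$ (the paper's $-a_l$) does satisfy $\partial_i\Psi^i_k[w]=\sum_{l=0}^k b_l w^\alpha|\nabla w|^{2l}S_{k-l}(w)$, with the $b_l$ as in the statement, and your $k=2$ coefficients check out against \eqref{6.19b}. The gap is in the final step: the direct comparison of $\int_{\partial\Omega}\Psi^i_k[u]\nu_i$ with $\int_{\partial\Omega}\Psi^i_k[v]\nu_i$ does not simply reduce to ``a nonnegative factor times $u_\nu-v_\nu$.'' For $k\geq 2$ the boundary integrand depends nonlinearly on $w_\nu$ and also on the mixed second derivatives $w_{\nu a}$ and on $\sum_a w_{aa}$ (which itself picks up a $w_\nu$ contribution through the second fundamental form). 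After tangential integration by parts, what survives is $(u_\nu-v_\nu)$ multiplied by terms such as $\Delta_{\partial\Omega}v$, $H(u_\nu+v_\nu)$, etc., which have no definite sign, and there is no visible cancellation reducing the difference to a manifestly nonnegative factor. In effect you are asserting that the functional $w_\nu\mapsto\int_{\partial\Omega}\Psi^i_k[w]\nu_i$ is monotone, which is precisely the content of the theorem and cannot be taken for granted.

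The paper's resolution is the homotopy argument, and it is an essential extra ingredient. Setting $w_t=(1-t)u+tv$, Lemma 3.2 (\eqref{7.22.19}) shows that $\partial_t\psi(w_t)$ is itself a sum of divergences whose fluxes all carry a factor of either $(v-u)$ or $(v-u)_i$ linearly — \emph{not} second derivatives of $v-u$. Integrating over $\Omega$ and using that $v-u=0$ on $\partial\Omega$, only the flux with $(v-u)_i$ survives, and there $(v-u)_i=(v-u)_\nu\nu_i$, so $\frac{d}{dt}\int_\Omega\psi(w_t)=\int_{\partial\Omega}(v-u)_\nu\sum_l b_l w^{\alpha+1}|\nabla w|^{2l}S^{ij}_{k-l}\nu_i\nu_j\geq0$ by nonnegativity of $(v-u)_\nu$, of $b_l$, and of $S^{ij}_{k-l}$ on $\Gamma_k$. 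Integrating in $t$ gives the theorem. Note that Lemma 3.2 is \emph{not} obtained by merely differentiating Lemma 3.1 under $\partial_t$ — that would produce $\partial_t S^{ij}_{k-l}$, hence second derivatives of $v-u$, in the flux — but requires a fresh rearrangement of the same kind as Lemma 3.1. You should incorporate this homotopy step; without it, the boundary comparison in your final paragraph is unsubstantiated for $k\geq2$.
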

	
	This  monotonicity formula is the most important tool to obtain  the proof of the Theorem 1.1.  It is different from  the usual monotonicity formulas on $k$-Hessian measure  in Trudinger-Wang in \cite{MR1634570}, Dai-Wang-Zhou \cite{MR3436399}. In some sense, it is similar to the monotonicity formula for $k$-convex functions on Heisenberg group by Trudinger-Zhang \cite{MR3035058}.
	
	The plan of this paper is as follows. In the next section we give various
	properties of $\sigma_k$-Yamabe operator, especially we obtain the relation between $S_{m}(v)$ and $v^m\sigma_{m}(\lambda \{v_{ij}\})$ in Lemma 2.3. In Section 3, we
first	use the special divergence structure with respect to the $\sigma_k$-Yamabe operator to get a differential identity  in Lemma 3.1, which will be also used to get the upper bound estimates via Moser iteration in Section 5.  Then we prove the monotonicity formulas i.e. the  Theorem 1.2. The new idea on this step is that we introduce the combination of $\psi(v) = \psi^{\alpha}_{k}(v):= \sum_{l = 0}^k b_{l} w^{\alpha}|\nabla v|^{2l}S_{k - l}(v)$ to replace the usual term $\sigma_{m}(\lambda \{v_{ij}\})$ as in Trudinger - Wang in \cite{MR1634570}.   Then we give comparison principle as a consequence of the monotonicity formulas.
	In Section 4, we complete the proof of Theorem 1.1 for $k> \frac{n}{2}$. In this case we use the Holder estimate Theorem 2.7, integral estimatese Theorem 3.1 in  Trudinger-Wang in \cite{MR1634570}  with the help from Lemma 2.3 in section 2, and we use the monotonicity formulas  Theorem 1.2.  The proof of the weak continuity result, Theorem 1.1 is then completed for $k> \frac{n}{2}$. 	
	In Section 5, we first obtain  the  interior $L^{\infty}$ bound with respect to the $\sigma_k$-Yamabe operator for $1\le k \le \frac{n}{2}$ via Moser iteration. As a consequence we   get a local uniform integral estimates from the differential identity  Lemma 3.1.	
	 Finally, in Section 6,   we use the integral estimates to complete the proof the weak continuity result, Theorem 1.1 for $1\le k\le \frac{n}{2}$. In this step we also follow that idea from Trudinger-Wang in \cite{MR1634570}.  
	 
	\textbf{Acknowledgment:} The first author would like to thank the helpful
	discussion and encouragement from Prof. X.-J. Wang on this subject.  The authors were supported by the National Natural Science Foundation of China (grants 12141105) and the first author also was supported by the National Key Research and Development Project (grants SQ2020YFA070080).

	\section{Properties of $\sigma_k$-Yamabe operator}
	In this section, we will give some fundamental properties of $\sigma_k$-Yamabe operator which will be widely used in this paper.  The first lemma appeared in Li-Nguyen-Wang
\cite{MR4691488}, for completely we contain its proof.  In this paper we always use the notation $S_{k}:= S_{k}(v): = \sigma_{k}(\lambda \{A_{ij}(v)\})$ and $\sigma_k:= \sigma_k(v):= \sigma_k(\lambda\{v_{ij}\})$.
	
	\begin{lemma}\cite{MR4691488}\label{5.7sum}
		If $u, v \in 	\tilde{\Phi}^k(\Omega)$, then $u + v \in 	\tilde{\Phi}^k(\Omega)$.
	\end{lemma}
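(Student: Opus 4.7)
The plan is to produce an explicit algebraic identity expressing $A_{ij}(u+v)$ as a conic combination of symmetric matrices each of which manifestly has spectrum in $\Gamma_k$. The conclusion will then be immediate from the convexity of the $k$-positive matrix cone.

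First I would do the naive expansion. Since $A(\cdot)$ is a quadratic form in its argument, polarization gives
\[
A_{ij}(u+v) = A_{ij}(u) + A_{ij}(v) + B_{ij}(u,v), \qquad B_{ij}(u,v) := uv_{ij} + vu_{ij} - \langle \nabla u,\nabla v\rangle\,\delta_{ij}.
\]
The first two terms lie in the $\Gamma_k$-cone by hypothesis; the difficulty is that the cross term $B(u,v)$ a priori has no obvious sign on its spectrum, so this decomposition alone is not enough.

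The key step I would pursue is to rewrite $B(u,v)$ by recycling $A(u)$ and $A(v)$ themselves: multiplying $A(u)$ by the positive factor $v/u$ already produces the required $vu_{ij}$ (modulo a multiple of the identity), and symmetrically for $A(v)$. The residual $\delta_{ij}$-coefficients turn out to combine into a perfect square. Concretely, I would establish the identity
\[
A_{ij}(u+v) = \frac{u+v}{u}\,A_{ij}(u) + \frac{u+v}{v}\,A_{ij}(v) + \frac{|v\nabla u - u\nabla v|^2}{2uv}\,\delta_{ij},
\]
whose verification is a short direct computation, the only non-trivial input being
\[
v^2|\nabla u|^2 + u^2|\nabla v|^2 - 2uv\,\langle\nabla u,\nabla v\rangle = |v\nabla u - u\nabla v|^2.
\]

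With this identity in hand, the proof concludes in one line. Since $u,v>0$, the coefficients $(u+v)/u$ and $(u+v)/v$ are strictly positive, so the first two summands have spectra in $\Gamma_k$ by the hypothesis that $A(u),A(v)$ do. The last summand is a non-negative multiple of $\delta_{ij}$, whose eigenvalue vector $(1,\dots,1)$ trivially lies in $\Gamma_k$. By G\aa rding's theorem, the set of symmetric matrices whose spectra lie in $\Gamma_k$ is a convex cone, and is therefore closed under sums of such matrices; hence $\lambda\{A_{ij}(u+v)\}\in\Gamma_k$. Combined with $u+v\in C^2(\Omega)$ and $u+v>0$, this gives $u+v\in\tilde{\Phi}^k(\Omega)$.

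The only substantive obstacle is spotting the perfect-square identity displayed above; once discovered, both its verification and the conclusion via the convex-cone property are essentially free.
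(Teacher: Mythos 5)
Your proof is correct and is essentially identical to the paper's: the paper arrives at the same decomposition $A_{ij}(u+v) = \tfrac{u+v}{u}A_{ij}(u) + \tfrac{u+v}{v}A_{ij}(v) + \tfrac{1}{2}\bigl(vu^{-1}|\nabla u|^2 + uv^{-1}|\nabla v|^2 - 2\nabla u\cdot\nabla v\bigr)\delta_{ij}$ and concludes by convexity of $\Gamma_k$. The only cosmetic difference is that you make the non-negativity of the last coefficient fully explicit as the perfect square $\tfrac{1}{2uv}|v\nabla u - u\nabla v|^2$, while the paper simply asserts the inequality.
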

	\begin{proof}[Proof of Lemma \ref{5.7sum}]
		Define $w := u + v$, then 
		\begin{align*}
			A_{i j}(w) &= (u + v)(u_{i j} + v_{i j}) - \frac{1}{2}|\nabla u + \nabla v|^2 \delta_{i j}\\
			&= (u + v)(u^{-1}A_{i j}(u) + v^{-1} A_{i j}(v)) + \frac{1}{2}(u + v)(u^{-1}|\nabla u|^2 \delta_{i j} + v^{-1}|\nabla v|^2 \delta_{i j})\\
			&- \frac{1}{2}|\nabla u + \nabla v|^2 \delta_{i j}\\
			&= (u + v)u^{-1}A_{i j}(u) + (u + v) v^{-1} A_{i j}(v) + \frac{1}{2}\delta_{i j}(v u^{-1}|\nabla u|^2 + uv^{-1}|\nabla v|^2 - 2\nabla u \cdot \nabla v).
		\end{align*}
		Since
		\begin{equation}
			v u^{-1}|\nabla u|^2 + uv^{-1}|\nabla v|^2 - 2\nabla u \cdot \nabla v \geq 0,
		\end{equation}
		and $\Gamma_k$ is convex , we get that 
		\begin{equation}
			\lambda(A_{i j}(w)) \in  \Gamma_k.
		\end{equation}
	\end{proof}
	
		By  Gonz\'alez \cite{MR2169873,  MR2247857, MR2263673}, we know that
	\begin{lemma}\label{2024.7.10lemma1}
		If $u \in 	\tilde{\Phi}^k(\Omega)$, then for any $1 \leq l \leq k$, the matrices $(S_{l}^{i j})$ defined in \eqref{7.10.21} is nonnegative-definite and
		\begin{equation}
			\sum_{j} \partial_{j}S^{ij}_{m + 1} = -(n - m)S_m v_i v^{-1} + n \sum_{j} S^{ij}_{m + 1}v_i v^{-1}.
		\end{equation}
	\end{lemma}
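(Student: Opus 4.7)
My plan is to treat the two assertions separately: the pointwise positivity of $(S_l^{ij})$ is immediate from $\lambda\{A(v)\}\in\Gamma_k$, while the divergence identity is a chain-rule computation that combines the classical divergence-free property of Newton tensors with a Cayley-type identity for elementary symmetric polynomials.

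For the first assertion, fix a point and diagonalise the symmetric matrix $A=A(v)$ in an orthonormal frame, so that $A=\mathrm{diag}(\lambda_1,\dots,\lambda_n)$ with $\lambda\in\Gamma_k\subseteq\Gamma_l$. In that frame $(S_l^{ij})$ is also diagonal, with $S_l^{ii}=\sigma_{l-1}(\lambda|i)$, where $\lambda|i$ denotes $\lambda$ with the $i$-th coordinate removed. The standard Garding fact $\lambda\in\Gamma_l\Longrightarrow\lambda|i\in\Gamma_{l-1}$ gives $S_l^{ii}\geq 0$, and positive semi-definiteness is frame-independent, so this settles (i) at every point of $\Omega$.

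For the divergence identity, I start from
\begin{equation*}
\partial_j A_{rs}=v_j v_{rs}+v\,v_{rsj}-v_k v_{kj}\delta_{rs},
\end{equation*}
obtained by differentiating $A_{rs}=vv_{rs}-\tfrac12|\nabla v|^2\delta_{rs}$, and apply the chain rule
\begin{equation*}
\sum_j\partial_j S_{m+1}^{ij}=\sum_{j,r,s}S_{m+1}^{ij,rs}\,\partial_j A_{rs},\qquad S_{m+1}^{ij,rs}:=\tfrac{\partial^2 S_{m+1}}{\partial A_{ij}\partial A_{rs}}.
\end{equation*}
The key cancellation is that the third-derivative contribution $\sum_{j,r,s}S_{m+1}^{ij,rs}\,v\,v_{rsj}$ vanishes. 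This is a purely algebraic identity on the Newton tensor: since $\sum_j\partial_j T_m^{ij}(D^2u)=0$ for every smooth $u$, and the $2$-jet and $3$-jet of $u$ at a point can be prescribed independently, one has $\sum_{j,r,s}T_m^{ij,rs}(B)W_{rsj}=0$ for arbitrary symmetric $B$ and arbitrary $W$ symmetric in $(r,s,j)$; the choice $B=A$, $W_{rsj}=v\,v_{rsj}$ fits.

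What remains is $\sum_{j,r,s}S_{m+1}^{ij,rs}(v_jv_{rs}-v_kv_{kj}\delta_{rs})$. Substituting $v_{rs}=v^{-1}A_{rs}+\tfrac{|\nabla v|^2}{2v}\delta_{rs}$ (and similarly for $v_{kj}$) causes the $|\nabla v|^2$-pieces from the two summands to cancel exactly. Three standard identities then close the computation: homogeneity of $\sigma_{m+1}$ gives $\sum_{r,s}S_{m+1}^{ij,rs}A_{rs}=mS_{m+1}^{ij}$; differentiating the trace relation $\sum_rS_{m+1}^{rr}=(n-m)S_m$ gives $\sum_rS_{m+1}^{ij,rr}=(n-m)S_m^{ij}$; and the Newton--Cayley identity $A\cdot S_m^{\cdot\cdot}=S_mI-S_{m+1}^{\cdot\cdot}$ turns $\sum_{j,k}v_kA_{kj}S_m^{ij}$ into $S_mv_i-\sum_jv_jS_{m+1}^{ij}$. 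Assembling yields
\begin{equation*}
\sum_j\partial_j S_{m+1}^{ij}=mv^{-1}\sum_jv_jS_{m+1}^{ij}-(n-m)v^{-1}\Bigl(S_mv_i-\sum_jv_jS_{m+1}^{ij}\Bigr),
\end{equation*}
and combining the coefficients $m+(n-m)=n$ produces precisely the claimed formula. The main hurdle is the index book-keeping; the only step beyond routine homogeneity and trace arguments is the vanishing of the third-derivative piece, which becomes transparent once the Newton tensor is written via the generalised Kronecker delta.
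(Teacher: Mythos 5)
Your proposal is correct on both counts, and it fills a gap in the text: the paper states this lemma with a citation to Gonz\'alez and gives no proof, so there is no in-paper argument to compare against. Your derivation of the divergence identity holds up step by step. The vanishing of $\sum_{j,r,s}S_{m+1}^{ij,rs}\,v\,v_{rsj}$ follows as you say from the algebraic fact that the Newton tensor of an arbitrary symmetric $B$ annihilates any fully symmetric $3$-tensor in those slots (proved by the divergence-free property of $T_m(D^2u)$ together with the independence of the $2$-jet and $3$-jet of $u$ at a point). After substituting $v_{rs}=v^{-1}\bigl(A_{rs}+\tfrac12|\nabla v|^2\delta_{rs}\bigr)$ the $|\nabla v|^2$ contributions do cancel, reducing the remainder to $v^{-1}\sum_{j,r,s}S_{m+1}^{ij,rs}\bigl(v_jA_{rs}-v_kA_{kj}\delta_{rs}\bigr)$; Euler's identity $\sum_{r,s}S_{m+1}^{ij,rs}A_{rs}=mS_{m+1}^{ij}$, the differentiated trace relation $\sum_rS_{m+1}^{ij,rr}=(n-m)S_m^{ij}$, and the Newton--Cayley relation $\sum_jA_{kj}S_m^{ij}=S_m\delta_{ki}-S_{m+1}^{ki}$ are all correctly applied, and combining $m+(n-m)=n$ yields exactly the stated formula. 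The positive-semidefiniteness argument via diagonalising $A$ and using $S_l^{ii}=\sigma_{l-1}(\lambda|i)\geq 0$ for $\lambda\in\Gamma_l$ is the standard G\r{a}rding-cone fact and is fine. One presentational remark: it would be worth saying explicitly that the cancellation of the $|\nabla v|^2$ terms uses $\sum_{r,s}S_{m+1}^{ij,rs}\delta_{rs}$ appearing with opposite signs in the two summands, so that no trace identity is needed at that stage; as written this is implicit but a reader might otherwise look for a separate reason.
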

	Next we will show $\lambda(u_{ij})\in \Gamma_k$ if $\lambda(A_{ij}(u))\in \Gamma_k$.
	\begin{lemma}\label{5.6.sigma}
		If $v \in 	\tilde{\Phi}^k(\Omega)$, then there exists a positive constant $C$ only depends on $n,l,q$ such that
		\begin{equation}
			v^{l}\sigma_{l}(v) \geq C|\nabla v|^{2q} S_{l - q} , \text{ with $1 \leq q < l \leq k$},
		\end{equation}
		and
		\begin{equation}
			S_{m} \leq v^{m}\sigma_{m}, \text{ with $1 \leq m \leq k$}.
		\end{equation}
	\end{lemma}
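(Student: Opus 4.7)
The whole lemma will follow from a single algebraic identity, so the plan is to derive that identity first and then read off the two inequalities.

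The key observation is that the definition $A_{ij}(v) = vv_{ij} - \tfrac{1}{2}|\nabla v|^2 \delta_{ij}$ gives the matrix relation $vv_{ij} = A_{ij}(v) + \tfrac{1}{2}|\nabla v|^2 \delta_{ij}$, so the eigenvalues of $vv_{ij}$ are precisely the eigenvalues of $A_{ij}(v)$ shifted by the scalar $\tfrac{1}{2}|\nabla v|^2$. I would then invoke the standard shift formula for elementary symmetric polynomials,
\begin{equation*}
\sigma_m(\lambda + t \mathbf{1}) = \sum_{j=0}^{m} \binom{n-m+j}{j} t^{j} \sigma_{m-j}(\lambda),
\end{equation*}
whose short proof compares coefficients in the generating-function identity $\prod_i (1 + s(\lambda_i+t)) = (1+st)^n \prod_i \bigl(1 + \tfrac{s}{1+st}\lambda_i\bigr)$. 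Applying this with $\lambda = \lambda\{A_{ij}(v)\}$ and $t = \tfrac{1}{2}|\nabla v|^2$, and using that $\sigma_m(vv_{ij}) = v^m \sigma_m$ since $v > 0$, produces the master identity
\begin{equation*}
v^{m}\sigma_{m}(v) \;=\; \sum_{j=0}^{m} \binom{n-m+j}{j} \frac{|\nabla v|^{2j}}{2^{j}}\, S_{m-j}(v), \qquad 1 \le m \le k.
\end{equation*}

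Once this identity is in hand both inequalities fall out just by discarding terms. Since $v \in \tilde{\Phi}^k(\Omega)$, every subscript $m-j$ appearing on the right lies in $\{0,1,\dots,k\}$ (with $S_0 \equiv 1$), so the Gårding cone condition forces each $S_{m-j} \ge 0$ and each term in the sum is nonnegative. Keeping only the $j=0$ summand yields $v^m \sigma_m \ge S_m$, which is the second inequality. Keeping only the $j=q$ summand in the case $m=l$, $1 \le q < l \le k$, yields
\begin{equation*}
v^{l}\sigma_{l}(v) \;\ge\; \binom{n-l+q}{q} \frac{|\nabla v|^{2q}}{2^{q}}\, S_{l-q}(v),
\end{equation*}
which is the first inequality with the explicit constant $C = \binom{n-l+q}{q}/2^{q}$, depending only on $n,l,q$.

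There is no real obstacle here: the content is entirely the shift formula for $\sigma_m$, and the only thing to check carefully is the binomial coefficient (easily verified against small cases such as $n=3$, $m=2$). As a byproduct the identity also shows $\sigma_m(\lambda\{v_{ij}\}) \ge 0$ for $1 \le m \le k$, so $\lambda\{v_{ij}\} \in \Gamma_k$, which is the remark preceding the lemma.
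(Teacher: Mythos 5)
Your proof is correct and lands on exactly the identity the paper establishes, namely
\begin{equation*}
v^{m}\sigma_{m}(v)\;=\;\sum_{j=0}^{m}\binom{n-m+j}{j}\frac{|\nabla v|^{2j}}{2^{j}}\,S_{m-j}(v),
\end{equation*}
but you reach it by a genuinely different and cleaner route. The paper starts from $S_{k}(v)=\sigma_{k}(v\lambda-b\mathbf{1})$ with $\lambda=\lambda\{v_{ij}\}$, expands it as $v^{k}\sigma_{k}+\sum_{l\ge 1}\binom{k}{l}\binom{n}{k}\binom{n}{k-l}^{-1}v^{k-l}\sigma_{k-l}(-b)^{l}$, then makes the ansatz $S_{k}=v^{k}\sigma_{k}+\sum_{l}A_{l}S_{k-l}(-b)^{l}$ and solves the resulting triangular system recursively for the coefficients $A_{l}$ --- effectively inverting the forward shift formula by hand. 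You instead apply the shift identity $\sigma_{m}(\lambda+t\mathbf{1})=\sum_{j}\binom{n-m+j}{j}t^{j}\sigma_{m-j}(\lambda)$ in the opposite direction, with $\lambda=\lambda\{A_{ij}(v)\}$ and $t=\tfrac12|\nabla v|^{2}$, which directly produces $v^{m}\sigma_{m}$ as a nonnegative combination of the $S_{m-j}$, no inversion required. This is shorter and makes the nonnegativity of every coefficient transparent, so the two inequalities (and the remark that $\lambda\{v_{ij}\}\in\Gamma_{k}$) fall out immediately; the only things worth saying explicitly in a final write-up are that $S_{0}\equiv 1$ and that $v\in\tilde{\Phi}^{k}(\Omega)$ gives $S_{i}\ge 0$ for $0\le i\le k$, both of which you invoke. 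Your constant $C=\binom{n-l+q}{q}/2^{q}$ is positive precisely because $l\le k\le n$, which holds in this setting.
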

	\begin{proof}[Proof of Lemma \ref{5.6.sigma} ]
			Let $\lambda = (\lambda_1, \cdots, \lambda_{n})$ be the eigenvalue vector of the matrix $( v_{i j})$ and $b := \frac{1}{2}|\nabla v|^2$. Define 
		\begin{equation}
			\sigma_k := \sigma_{k}(v_{i j}) = \sum_{1 \leq i_1 < \cdots < i_{k} \leq n} \lambda_{i_1} \cdots \lambda_{i_k}.
		\end{equation} Then
		\begin{equation}
			\begin{aligned}
				S_{k}(v) &= \sum_{1 \leq i_1 < \cdots < i_{k} \leq n} (v\lambda_{i_1} - b)\cdots (v\lambda_{i_k} - b)\\
				&= v^{k}\sigma_{k} +  \sum_{l = 1}^k \frac{C^l_k C^k_n}{C^{k - l}_n} v^{k - l}\sigma_{k - l} (-b)^{l}.
			\end{aligned}
		\end{equation}
		Suppose
		\begin{equation}
			S_{k}(v) = v^{k}\sigma_{k} + \sum_{l = 1}^k A_{l} S_{k - l} (-b)^{l},
		\end{equation}
		then
		\begin{align*}
			S_{k}(v) &= v^{k}\sigma_{k} + \sum_{m = 1}^k A_{m} S_{k - m} (-b)^{m}\\
			&= v^{k}\sigma_{k} + \sum_{m = 1}^k A_{m}  (-b)^{m}\Big[ v^{k - m}\sigma_{k - m} +  \sum_{l = 1}^{k - m} \frac{C^l_{k - m} C^{k - m}_n}{C^{k - m - l}_n} v^{k - m - l}\sigma_{k - m - l} (-b)^{l}  \Big]\\
			&= v^{k}\sigma_{k} + \sum_{m = 1}^k A_{m}  (-b)^{m} v^{k - m}\sigma_{k - m} + \sum_{m = 1}^k A_{m}   \sum_{l = 1}^{k - m} \frac{C^l_{k - m} C^{k - m}_n}{C^{k - m - l}_n} v^{k - m - l}\sigma_{k - m - l} (-b)^{m + l} \\
			&=v^{k} \sigma_{k} + \sum_{m = 1}^k A_{m}  (-b)^{m} v^{k - m}\sigma_{k - m} + \sum_{m = 2}^k (-b)^{m} v^{k - m}\sigma_{k - m} \sum_{l = 1}^{m - 1} A_{l} \frac{C^{m - l}_{k - l} C^{k - l}_n}{C^{k - m}_n}\\
			&= v^{k}\sigma_{k}  + \sum_{m = 1}^k (-b)^{m} v^{k - m}\sigma_{k - m} \sum_{l = 1}^{m} A_{l} \frac{C^{m - l}_{k - l} C^{k - l}_n}{C^{k - m}_n}.
		\end{align*}	
		So we get that for any $1 \leq m \leq k$
		\begin{align*}
			& \sum_{l = 1}^{m } A_{l} C^{m - l}_{k - l} C^{k - l}_n  = C^m_k C^k_n.
		\end{align*}
		
		\begin{equation}\label{5.5.equ}
			\Rightarrow\sum_{l = 1}^{m }\frac{m! (n - k)!}{(m - l)! (n - k + l)!} A_{l} = 1.
		\end{equation}
		We find the solutions to \eqref{5.5.equ} are
		\begin{equation}
			A_{l} = (-1)^{l - 1}\frac{(n - k + l)!}{l! (n - k)!}, \text{ with $1 \leq l \leq k$.}
		\end{equation}
		Thus we get
		\begin{equation}\label{5.6.equ1}
			\begin{aligned}
				S_{k}(v) &= v^{k}\sigma_{k} - \sum_{l = 1}^k \frac{(n - k + l)!}{l! (n - k)!}S_{k - l} b^{l}.
			\end{aligned}
		\end{equation}

	\end{proof}
	So by Lemma \ref{5.6.sigma}, we know that if $u \in \tilde\Phi^{k}(\Omega)$, then $\sigma_{l}(u_{i j}) \geq 0$ for any $0 \leq l \leq k$.

	\section{Monotonicity formula}

	In this section, inspired by Lemma 3.3 in Gonz\'alez \cite{MR2169873}, we give a special divergence structure with respect to the $\sigma_k$-Yamabe operator.

	\begin{lemma}\label{6.19divergence}
		There exist $\{a_{l}\}_{l = 0}^{k - 1}$ and $\{b_{l}\}_{l = 0}^{k}$, such that
		\begin{equation}
		\sum_{l = 0}^{k - 1} a_{l} (v^{\alpha + 1}|\nabla v|^{2l}S^{i j}_{k - l} v_{i})_{j} +\sum_{l = 0}^k b_{l} v^{\alpha}|\nabla v|^{2l}S_{k - l} = 0,
	\end{equation}
	with $a_0 = - 1.$
	\end{lemma}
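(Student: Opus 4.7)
My plan is to evaluate the divergence $\partial_j(v^{\alpha+1}|\nabla v|^{2l}S^{ij}_{k-l}v_i)$ directly by Leibniz, use Lemma~\ref{2024.7.10lemma1} together with a couple of Euler/Newton-type identities to rewrite everything in terms of the ``pure'' ingredients $v^\alpha|\nabla v|^{2l}S_{k-l}$ and the scalar contractions $T_l:=S^{ij}_{k-l}v_iv_j$, and then choose the $a_l$ so that the $T_l$'s cancel after summation. The existence of $\{a_l\}$ and $\{b_l\}$ with $a_0=-1$ will reduce to solving a triangular linear recursion.

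Concretely, Leibniz gives four pieces. Three of them are easy. The derivative on $v^{\alpha+1}$ yields $(\alpha+1)v^\alpha|\nabla v|^{2l}T_l$. The derivative on $S^{ij}_{k-l}$ is handled by Lemma~\ref{2024.7.10lemma1}, which after contraction with $v_i$ produces $-(n-k+l+1)v^\alpha|\nabla v|^{2l+2}S_{k-l-1}+nv^\alpha|\nabla v|^{2l}T_l$. The piece $v^{\alpha+1}|\nabla v|^{2l}S^{ij}_{k-l}v_{ij}$ is evaluated by combining the Euler identity $S^{ij}_{k-l}A_{ij}(v)=(k-l)S_{k-l}$ with the trace relation $S^{ii}_{k-l}=(n-k+l+1)S_{k-l-1}$ to get $vS^{ij}_{k-l}v_{ij}=(k-l)S_{k-l}+\tfrac{n-k+l+1}{2}|\nabla v|^2S_{k-l-1}$.

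The delicate piece is $M_l:=\sum S^{ij}_{k-l}v_iv_mv_{mj}$ coming from the derivative of $|\nabla v|^{2l}$, which is not obviously of the required form. I would dispose of it via the Newton identity $\sum_j S^{ij}_pA_{jm}=S_p\delta_{im}-S^{im}_{p+1}$, which is immediate by diagonalising $A$. Applied with $p=k-l$ and contracted with $v_iv_m$, it gives $vM_l=|\nabla v|^2S_{k-l}-T_{l-1}+\tfrac12|\nabla v|^2T_l$. After substitution each divergence takes the clean form
\[
(\alpha+n+l+1)v^\alpha|\nabla v|^{2l}T_l-2lv^\alpha|\nabla v|^{2l-2}T_{l-1}+(k+l)v^\alpha|\nabla v|^{2l}S_{k-l}-\tfrac{n-k+l+1}{2}v^\alpha|\nabla v|^{2l+2}S_{k-l-1}.
\]

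Multiplying by $a_l$ and summing over $l=0,\dots,k-1$, the coefficient of $v^\alpha|\nabla v|^{2l}T_l$ for $0\le l\le k-2$ is $a_l(\alpha+n+l+1)-2(l+1)a_{l+1}$; forcing each to vanish is a triangular recursion that uniquely determines $a_1,\dots,a_{k-1}$ from $a_0=-1$. The only leftover contraction, $T_{k-1}=\sum\delta_{ij}v_iv_j=|\nabla v|^2$, is already in the target form $|\nabla v|^{2k}S_0$ and gets absorbed into $b_k$. Reading off the coefficients of $v^\alpha|\nabla v|^{2l}S_{k-l}$ defines the remaining $b_l$ and closes the identity. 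The main obstacle is the $M_l$ term: without the Newton identity one is stuck with an irreducible $v_{ij}v_iv_j$ contamination that lies outside the $\{v^\alpha|\nabla v|^{2l}S_{k-l}\}$ span; once it is traded for $T_l$ and $T_{l-1}$, the rest is bookkeeping and a triangular recursion.
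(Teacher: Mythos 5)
Your proposal is correct and takes essentially the same route as the paper: both reduce the Leibniz expansion of $(v^{\alpha+1}|\nabla v|^{2l}S^{ij}_{k-l}v_i)_j$ — using the Euler identity, the trace relation, the Newton identity $\sum_j S^{ij}_pA_{jm}=S_p\delta_{im}-S^{im}_{p+1}$, and Lemma~\ref{2024.7.10lemma1} — to exactly the form \eqref{4.28}, and then solve the same triangular recursion $a_l(\alpha+n+l+1)=2(l+1)a_{l+1}$ to kill the contraction terms and read off the $b_l$. The only cosmetic difference is direction: the paper starts from $(k-l)S_{k-l}=S^{ij}_{k-l}A_{ij}$ and integrates by parts to produce the divergence, whereas you differentiate the divergence directly, but the intermediate identity and the bookkeeping are identical.
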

	\begin{proof}  We use the computation from  Gonz\'alez \cite{MR2169873}.
		\begin{equation}
		\begin{aligned}
			&(k - l)v^{\alpha}|\nabla v|^{2l}S_{k - l} \\
			=&  v^{\alpha}|\nabla v|^{2l}S^{i j}_{k - l}A_{i j}\\
			=&  v^{\alpha}|\nabla v|^{2l}S^{i j}_{k - l}\left(v v_{i j} - \frac{1}{2}|\nabla v|^2 \delta_{i j}\right)\\
			=& (v^{\alpha + 1}|\nabla v|^{2l}S^{i j}_{k - l} v_{i})_{j} - (\alpha + 1) v^{\alpha}|\nabla v|^{2l}S^{i j}_{k - l} v_{i}v_{j} - 2l v^{\alpha + 1}|\nabla v|^{2l - 2}S^{i j}_{k - l} v_{i} v_{m j} v_{m}\\
			&- v^{\alpha + 1}|\nabla v|^{2l} v_{i} \cdot \partial_{j}S^{i j}_{k - l} - \frac{1}{2}(n + l + 1 - k) v^{\alpha}|\nabla v|^{2l + 2} S_{k - l - 1}\\
			=& (v^{\alpha + 1}|\nabla v|^{2l}S^{i j}_{k - l} v_{i})_{j} - (\alpha + 1) v^{\alpha}|\nabla v|^{2l}S^{i j}_{k - l} v_{i}v_{j} - 2l v^{\alpha}|\nabla v|^{2l - 2}S^{i j}_{k - l} v_{i} v_{m} \left( A_{m j} +  \frac{1}{2}|\nabla v|^2 \delta_{m j}\right)\\
			&- v^{\alpha + 1}|\nabla v|^{2l} v_{i} \cdot \Big[ -(n - k + l + 1) v^{-1} S_{k - l- 1}v_{i} + n v^{-1} S^{i j}_{k - l}v_{j} \Big] \\
			&- \frac{1}{2}(n + l + 1 - k) v^{\alpha}|\nabla v|^{2l + 2} S_{k - l - 1},
		\end{aligned}
	\end{equation}
	
	\begin{align*}
		&\Rightarrow (k - l)v^{\alpha}|\nabla v|^{2l}S_{k - l} \\
			=& (v^{\alpha + 1}|\nabla v|^{2l}S^{i j}_{k - l} v_{i})_{j} - (\alpha + n + l + 1) v^{\alpha}|\nabla v|^{2l}S^{i j}_{k - l} v_{i}v_{j} - 2l v^{\alpha}|\nabla v|^{2l - 2}S^{i j}_{k - l} v_{i} v_{m}  A_{m j} \\
			&  + \frac{1}{2}(n + l + 1 - k) v^{\alpha}|\nabla v|^{2l + 2} S_{k - l - 1}\\
			=& (v^{\alpha + 1}|\nabla v|^{2l}S^{i j}_{k - l} v_{i})_{j} - (\alpha + n + l + 1) v^{\alpha}|\nabla v|^{2l}S^{i j}_{k - l} v_{i}v_{j} - 2l v^{\alpha}|\nabla v|^{2l - 2} v_{i} v_{m} (S_{k - l}\delta_{i m} - S^{i m}_{k - l + 1})  \\
			&  + \frac{1}{2}(n + l + 1 - k) v^{\alpha}|\nabla v|^{2l + 2} S_{k - l - 1},\\
	\end{align*}

	\begin{equation}\label{4.28.828}
		\begin{aligned}
			&\Rightarrow (k + l)v^{\alpha}|\nabla v|^{2l}S_{k - l} \\
			=& (v^{\alpha + 1}|\nabla v|^{2l}S^{i j}_{k - l} v_{i})_{j} - (\alpha + n + l + 1) v^{\alpha}|\nabla v|^{2l}S^{i j}_{k - l} v_{i}v_{j} + 2l v^{\alpha}|\nabla v|^{2l - 2}  S^{i j}_{k - l + 1} v_{i} v_{j} \\
			&  + \frac{1}{2}(n + l + 1 - k) v^{\alpha}|\nabla v|^{2l + 2} S_{k - l - 1}.\\
		\end{aligned}
	\end{equation}
	Write $S_{-1} = 0$, then we get that when $0 \leq l \leq k$,
	\begin{equation}\label{4.28}
		\begin{aligned}
			&\Rightarrow (v^{\alpha + 1}|\nabla v|^{2l}S^{i j}_{k - l} v_{i})_{j} \\
			=&(k + l)v^{\alpha}|\nabla v|^{2l}S_{k - l}  +  (\alpha + n + l + 1) v^{\alpha}|\nabla v|^{2l}S^{i j}_{k - l} v_{i}v_{j} - 2l v^{\alpha}|\nabla v|^{2l - 2}  S^{i j}_{k - l + 1} v_{i} v_{j} \\
			&- \frac{1}{2}(n + l + 1 - k) v^{\alpha}|\nabla v|^{2l + 2} S_{k - l - 1}.\\
		\end{aligned}
	\end{equation}
	We hope that 
	\begin{equation}
		\sum_{l = 0}^{k - 1} a_{l} (v^{\alpha + 1}|\nabla v|^{2l}S^{i j}_{k - l} v_{i})_{j} +\sum_{l = 0}^k b_{l} v^{\alpha}|\nabla v|^{2l}S_{k - l} = 0.
	\end{equation}
	Using \eqref{4.28}, this means that
	\begin{align*}
		0&= \sum_{l = 0}^{k - 1} a_{l} \Big[ (k + l)v^{\alpha}|\nabla v|^{2l}S_{k - l}  +  (\alpha + n + l + 1) v^{\alpha}|\nabla v|^{2l}S^{i j}_{k - l} v_{i}v_{j} - 2l v^{\alpha}|\nabla v|^{2l - 2}  S^{i j}_{k - l + 1} v_{i} v_{j} \\
			&- \frac{1}{2}(n + l + 1 - k) v^{\alpha}|\nabla v|^{2l + 2} S_{k - l - 1} \Big]\ + \sum_{l = 0}^{k} b_{l} v^{\alpha}|\nabla v|^{2l}S_{k - l} \\
		&= b_{k}v^{\alpha}|\nabla v|^{2k} + \sum_{l = 0}^{k - 1} \Big[ a_{l} (k + l) + b_{l}\Big] v^{\alpha}|\nabla v|^{2l}S_{k - l} - \sum_{l = 0}^{k - 1}\frac{1}{2}a_{l}(n + l + 1 - k) v^{\alpha}|\nabla v|^{2l + 2} S_{k - l - 1} \\
		&+ \sum_{l = 0}^{k - 1} a_{l} (\alpha + n + l + 1) v^{\alpha}|\nabla v|^{2l}S^{i j}_{k - l} v_{i}v_{j} - \sum_{l = 0}^{k - 1} 2l a_{l} v^{\alpha}|\nabla v|^{2l - 2}  S^{i j}_{k - l + 1} v_{i} v_{j}\\
		&= b_{k}v^{\alpha}|\nabla v|^{2k} +  (a_{0} k + b_{0}) v^{\alpha}S_{k} - \frac{1}{2}n a_{k - 1} v^{\alpha}|\nabla v|^{2k} + a_{k - 1} (\alpha + n + k) v^{\alpha}|\nabla v|^{2k}  \\
		&+ \sum_{l = 1}^{k - 1} \Big[ a_{l} (k + l) + b_{l}\Big] v^{\alpha}|\nabla v|^{2l}S_{k - l} - \sum_{l = 0}^{k - 2}\frac{1}{2}a_{l}(n + l + 1 - k) v^{\alpha}|\nabla v|^{2l + 2} S_{k - l - 1} \\
		&+ \sum_{l = 0}^{k - 2} a_{l} (\alpha + n + l + 1) v^{\alpha}|\nabla v|^{2l}S^{i j}_{k - l} v_{i}v_{j} - \sum_{l = 1}^{k - 1} 2l a_{l} v^{\alpha}|\nabla v|^{2l - 2}  S^{i j}_{k - l + 1} v_{i} v_{j}\\
		&= \Big[ b_{k} + a_{k - 1} (\alpha + \frac{1}{2} n + k)  \Big] v^{\alpha}|\nabla v|^{2k} +  (a_{0} k + b_{0}) v^{\alpha}S_{k}   \\
		&+ \sum_{l = 1}^{k - 1} \Big[ a_{l} (k + l) + b_{l}\Big] v^{\alpha}|\nabla v|^{2l}S_{k - l} - \sum_{l = 1}^{k - 1}\frac{1}{2}a_{l - 1}(n + l - k) v^{\alpha}|\nabla v|^{2l} S_{k - l} \\
		&+ \sum_{l = 1}^{k - 1} a_{l - 1} (\alpha + n + l) v^{\alpha}|\nabla v|^{2l - 2}S^{i j}_{k - l + 1} v_{i}v_{j} - \sum_{l = 1}^{k - 1} 2l a_{l} v^{\alpha}|\nabla v|^{2l - 2}  S^{i j}_{k - l + 1} v_{i} v_{j}.\\
	\end{align*}
	Therefore we get that 
	\begin{numcases}{}
		 \label{4.28.1} b_{k} + a_{k - 1} (\alpha + \frac{1}{2} n + k) = 0,\\
		 \label{4.28.2}a_{0} k + b_{0} = 0,\\
		\label{4.28.3} a_{l} (k + l) + b_{l} - \frac{1}{2}a_{l - 1}(n + l - k) = 0,\\
		\label{4.28.4} a_{l - 1} (\alpha + n + l)  - 2l a_{l} = 0,
	\end{numcases}
	with $1 \leq l \leq k - 1$.  From now on, we always let $a_0 = -1, b_{0} = k$. So for fixed $n, k, \alpha$, the above equations always have a unique solution $\{ a_{0}, \cdots , a_{k - 1}, b_{0}, \cdots, b_{k - 1}, b_{k}\}$:
	
	\begin{equation}
		a_{l} = - \frac{(\alpha + n + l)!}{2^{l}\cdot l!(\alpha + n)!}, \text{ for $l = 0, 1, \cdots, k - 1$,}
	\end{equation}
	and
	\begin{equation}\label{6.19b}
			\begin{aligned}
				b_{l + 1} &=  \Big[(k + l + 1)\alpha + kn + 2k(l + 1) \Big]\cdot \frac{(\alpha + n + l)!}{2^{l + 1}(l + 1)! (\alpha + n)!}. 
			\end{aligned}
		\end{equation}
		with $0 \leq l \leq k - 1$ and $b_{0} = k.$
	\end{proof}
	From now on, let us define
	\begin{equation}\label{5.6psi}
		\psi(w) = \psi^{\alpha}_{k}(w):= \sum_{l = 0}^k b_{l} w^{\alpha}|\nabla w|^{2l}S_{k - l}(w).
	\end{equation}

	\begin{lemma}\label{6.18lemma2}
		For  any $u, v \in \Phi^k(\Omega)\cap C^2(\Omega)$, we have 
		\begin{equation}
			\begin{aligned}
				\frac{\partial}{\partial t}\psi &= \sum_{l = 0}^{k}  b_{l}\Big[w^{\alpha + 1}|\nabla w|^{2l}S_{k - l}^{i j}  (v - u)_{i}\Big]_{j}  +  2 \sum_{l = 0}^{k} l b_{l}  \Big[w^{\alpha}|\nabla w|^{2l - 2}S^{i j}_{k - l + 1} w_{i}(v - u) \Big]_{j}\\
				& -\sum_{l = 0}^{k} (\alpha + n + l + 1) b_{l} \Big[w^{\alpha}|\nabla w|^{2l}S^{i j}_{k - l} w_{i}(v - u) \Big]_{j}.\\
			\end{aligned}
		\end{equation}
	\end{lemma}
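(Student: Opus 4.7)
The plan is a direct verification by chain rule and Leibniz expansion. First I would make the time dependence explicit by setting $w = w(t) = u + t(v-u)$ for $t \in [0,1]$, so that $\partial_t w = v-u =: \phi$, $\partial_t w_i = \phi_i$ and $\partial_t w_{ij} = \phi_{ij}$. The identity of Lemma~\ref{6.18lemma2} is then a pointwise identity in $x$ for each fixed $t$, so it suffices to compute $\partial_t \psi$ via chain rule on $\psi = \sum_l b_l w^\alpha |\nabla w|^{2l} S_{k-l}(w)$, to separately expand the three divergence terms on the right-hand side via Leibniz, and to match structural pieces.

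Differentiating $\psi$ factor by factor yields
\begin{equation*}
\partial_t \psi = \sum_l b_l\Bigl[\alpha w^{\alpha-1}\phi|\nabla w|^{2l}S_{k-l} + 2l w^\alpha |\nabla w|^{2l-2} w_m\phi_m S_{k-l} + w^\alpha|\nabla w|^{2l} S^{ij}_{k-l}(\phi w_{ij} + w\phi_{ij} - w_m\phi_m \delta_{ij})\Bigr].
\end{equation*}
Using the Euler-type identities $S^{ij}_{k-l} A_{ij} = (k-l) S_{k-l}$ and $S^{ij}_{k-l}\delta_{ij} = (n-k+l+1) S_{k-l-1}$ together with $w w_{ij} = A_{ij} + \tfrac12|\nabla w|^2 \delta_{ij}$, one reduces $\partial_t \psi$ to an explicit linear combination of five kinds of terms: $w^{\alpha+1}|\nabla w|^{2l}S^{ij}_{k-l}\phi_{ij}$, $w^{\alpha-1}\phi|\nabla w|^{2l}S_{k-l}$, $w^{\alpha-1}\phi|\nabla w|^{2l+2}S_{k-l-1}$, $w^\alpha|\nabla w|^{2l-2}w_m\phi_m S_{k-l}$ and $w^\alpha|\nabla w|^{2l}w_m\phi_m S_{k-l-1}$. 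Expanding each of the three RHS divergences with Leibniz and then simplifying via the same three identities, plus Lemma~\ref{2024.7.10lemma1} for $\partial_j S^{ij}_p$ and the Newton-type identity $S^{ij}_p A_{mj} = S_p\delta_{im} - S^{im}_{p+1}$ already used in the proof of Lemma~\ref{6.19divergence}, recasts the RHS as a linear combination of the same five structural types.

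The $\phi_{ij}$ piece is produced only by the first RHS divergence and matches the LHS trivially with coefficient $b_l w^{\alpha+1}|\nabla w|^{2l} S^{ij}_{k-l}$. For the remaining types one reindexes sums by $l \mapsto l \pm 1$ so like terms line up, and the match reduces to scalar relations among consecutive $b_l$'s; these are precisely the recurrences \eqref{4.28.1}--\eqref{4.28.4} of Lemma~\ref{6.19divergence} with the explicit $a_l = -\frac{(\alpha+n+l)!}{2^l l!(\alpha+n)!}$ substituted, reflecting that Lemma~\ref{6.18lemma2} is the pointwise first variation in $w$ of the divergence identity of Lemma~\ref{6.19divergence}. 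As a correctness spot check for the pure $w^{\alpha-1}\phi S_k$ coefficient, the chain rule gives $b_0(\alpha+k) = k(\alpha+k)$, while the RHS contributes $2b_1 k$ from the second divergence at $l = 1$ (through $S^{ij}_k w_{ij}$) and $-(\alpha+n+1)b_0 k$ from the third divergence at $l = 0$, which by \eqref{6.19b} sum to $k(\alpha+k)$.

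The main obstacle is combinatorial bookkeeping: each of the three RHS divergences produces roughly half a dozen summands after Leibniz plus the substitutions from Lemma~\ref{2024.7.10lemma1} and the Newton identity, so several dozen terms have to be tracked, reindexed and compared. The most delicate cross-terms are those arising when $S^{ij}_p A_{mj}$ is applied to rewrite the $w_m w_{mj}$ factor in the expansion of the first divergence, since this converts $S^{ij}_{k-l}$ contributions into $S^{im}_{k-l+1}$ contributions and couples neighboring $l$-indices. Once this organization is carried out cleanly, the identity closes through the coefficient recurrences of Lemma~\ref{6.19divergence}.
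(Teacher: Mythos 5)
Your proposal is correct and follows essentially the same route as the paper's own proof: differentiate $\psi$ in $t$ by the chain rule, convert $w w_{ij}$ to $A_{ij} + \tfrac12|\nabla w|^2\delta_{ij}$ and apply the Euler identities, handle $\partial_j S^{ij}_p$ with Lemma~\ref{2024.7.10lemma1} and the Newton identity $S^{ij}_p A_{mj} = S_p\delta_{im} - S^{im}_{p+1}$, reindex in $l$, and close the identity via the explicit formulas for $a_l,b_l$ coming from the recurrences \eqref{4.28.1}--\eqref{4.28.4}. The only cosmetic difference is organizational (you compute both sides independently and match, the paper transforms the left side until it becomes the right), and your spot check of the $w^{\alpha-1}\phi S_k$ coefficient is accurate.
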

	\begin{proof}[Proof of Lemma \ref{6.18lemma2}]
		Define
	\begin{equation}
			w = w(x, t) := (1 - t)u(x) + t v(x).
	\end{equation}
By direct computation, we get
	
	\begin{align*}
		&\frac{\partial}{\partial t}(w^{\alpha}|\nabla w|^{2l}S_{k - l}) \\
		=&\alpha w^{\alpha - 1}|\nabla w|^{2l}S_{k - l}(v - u) + 2l w^{\alpha}|\nabla w|^{2l - 2}S_{k - l}\nabla w\cdot \nabla (v - u) \\
		&+ w^{\alpha}|\nabla w|^{2l}S_{k - l}^{i j}\Big[(v - u) w_{i j} + w (v - u)_{i j} - \nabla w\cdot \nabla (v - u) \delta_{i j}\Big]\\
		=&\alpha w^{\alpha - 1}|\nabla w|^{2l}S_{k - l}(v - u) + 2l w^{\alpha}|\nabla w|^{2l - 2}S_{k - l}\nabla w\cdot \nabla (v - u) \\
		&+ w^{\alpha - 1}|\nabla w|^{2l}S_{k - l}^{i j}(v - u) \left( A_{i j} + \frac{1}{2}|\nabla w|^2 \delta_{i j}\right) + w^{\alpha + 1}|\nabla w|^{2l}S_{k - l}^{i j}  (v - u)_{i j} \\
		&- (n + l + 1 - k)  w^{\alpha}|\nabla w|^{2l}S_{k - l - 1}\nabla w\cdot \nabla (v - u)\\
		=&(\alpha + k - l) w^{\alpha - 1}|\nabla w|^{2l}S_{k - l}(v - u) + 2l w^{\alpha}|\nabla w|^{2l - 2}S_{k - l}\nabla w\cdot \nabla (v - u) \\
		& + \frac{1}{2}(n + l + 1 - k) w^{\alpha - 1}|\nabla w|^{2l + 2}S_{k - l - 1}(v - u)   \\
		&+ w^{\alpha + 1}|\nabla w|^{2l}S_{k - l}^{i j}  (v - u)_{i j} - (n + l + 1 - k)  w^{\alpha}|\nabla w|^{2l}S_{k - l - 1}\nabla w\cdot \nabla (v - u),\\
	\end{align*}
	\begin{equation}
		\begin{aligned}
		\Rightarrow	&\frac{\partial}{\partial t}(w^{\alpha}|\nabla w|^{2l}S_{k - l}) \\
			=&(\alpha + k - l) w^{\alpha - 1}|\nabla w|^{2l}S_{k - l}(v - u)  + \frac{1}{2}(n + l + 1 - k) w^{\alpha - 1}|\nabla w|^{2l + 2}S_{k - l - 1}(v - u) \\
			& + 2l w^{\alpha}|\nabla w|^{2l - 2}S_{k - l}\nabla w\cdot \nabla (v - u) - (n + l + 1 - k)  w^{\alpha}|\nabla w|^{2l}S_{k - l - 1}\nabla w\cdot \nabla (v - u) \\
			&+ w^{\alpha + 1}|\nabla w|^{2l}S_{k - l}^{i j}  (v - u)_{i j}. \\
		\end{aligned}
	\end{equation}
So we get
	\begin{align*}
		&w^{\alpha + 1}|\nabla w|^{2l}S_{k - l}^{i j}  (v - u)_{i j} \\
		=&\Big[ w^{\alpha + 1}|\nabla w|^{2l}S_{k - l}^{i j}  (v - u)_{i}\Big]_{j} -  (w^{\alpha + 1}|\nabla w|^{2l}S_{k - l}^{i j} )_{j} (v - u)_{i} \\
		=&\Big[ w^{\alpha + 1}|\nabla w|^{2l}S_{k - l}^{i j}  (v - u)_{i}\Big]_{j} - (\alpha + 1) w^{\alpha}|\nabla w|^{2l}S_{k - l}^{i j}w_{j} (v - u)_{i}\\
		& - 2l w^{\alpha + 1}|\nabla w|^{2l - 2}S_{k - l}^{i j} w_{j m}w_{m} (v - u)_{i} \\
		&-  w^{\alpha + 1}|\nabla w|^{2l} (v - u)_{i}\cdot \Big[ -(n - k + l + 1) w^{-1} S_{k - l - 1}w_{i} + n w^{-1} S^{i j}_{k - l}w_{j} \Big]\\
		=&\Big[ w^{\alpha + 1}|\nabla w|^{2l}S_{k - l}^{i j}  (v - u)_{i}\Big]_{j} - (\alpha + n + 1) w^{\alpha}|\nabla w|^{2l}S_{k - l}^{i j}w_{j} (v - u)_{i}\\
		& -  2l w^{\alpha + 1}|\nabla w|^{2l - 2}S_{k - l}^{i j} w_{j m}w_{m} (v - u)_{i} \\
		&+  (n - k + l + 1) w^{\alpha}|\nabla w|^{2l} S_{k - l - 1} \nabla w\cdot \nabla (v - u) .
	\end{align*}
	Note that
	\begin{equation}
		wS^{ij}_{k - l}w_{j m} = S^{ij}_{k - l}\left( A_{j m} + \frac{1}{2}|\nabla w|^2 \delta_{j m} \right) = S_{k - l} \delta_{i m} - S^{im}_{k - l + 1} + \frac{1}{2}|\nabla w|^2 S^{i m}_{k - l},
	\end{equation}
	then we obtain
	\begin{align*}
		& w^{\alpha + 1}|\nabla w|^{2l}S_{k - l}^{i j}  (v - u)_{i j} \\
		=&\Big[ w^{\alpha + 1}|\nabla w|^{2l}S_{k - l}^{i j}  (v - u)_{i}\Big]_{j} -  (\alpha + n + 1) w^{\alpha}|\nabla w|^{2l}S_{k - l}^{i j}w_{j} (v - u)_{i} \\
		& -  2l w^{\alpha}|\nabla w|^{2l - 2} w_{j} (v - u)_{i}\left( S_{k - l} \delta_{i j} - S^{ij}_{k - l + 1} + \frac{1}{2}|\nabla w|^2 S^{i j}_{k - l} \right) \\
		&+  (n - k + l + 1) w^{\alpha}|\nabla w|^{2l} S_{k - l - 1} \nabla w\cdot \nabla (v - u)\\
		=&\Big[ w^{\alpha + 1}|\nabla w|^{2l}S_{k - l}^{i j}  (v - u)_{i}\Big]_{j}\\
		&+  2l w^{\alpha}|\nabla w|^{2l - 2}  S^{ij}_{k - l + 1}  w_{j} (v - u)_{i}  - (\alpha + n + l + 1) w^{\alpha}|\nabla w|^{2l}S_{k - l}^{i j}w_{j} (v - u)_{i} \\
		&-  2l w^{\alpha}|\nabla w|^{2l - 2}  S_{k - l} \nabla w\cdot \nabla (v - u) +  (n - k + l + 1) w^{\alpha}|\nabla w|^{2l} S_{k - l - 1} \nabla w\cdot \nabla (v - u).\\
	\end{align*}
	Finally we get that
	\begin{align*}
		&\frac{\partial}{\partial t}(w^{\alpha}|\nabla w|^{2l}S_{k - l}) \\
		=&(\alpha + k - l) w^{\alpha - 1}|\nabla w|^{2l}S_{k - l}(v - u)  + \frac{1}{2}(n + l + 1 - k) w^{\alpha - 1}|\nabla w|^{2l + 2}S_{k - l - 1}(v - u) \\
		& + 2l w^{\alpha}|\nabla w|^{2l - 2}S_{k - l}\nabla w\cdot \nabla (v - u) - (n + l + 1 - k)  w^{\alpha}|\nabla w|^{2l}S_{k - l - 1}\nabla w\cdot \nabla (v - u) \\
		&+ \Big[w^{\alpha + 1}|\nabla w|^{2l}S_{k - l}^{i j}  (v - u)_{i}\Big]_{j}\\
		&+  2l w^{\alpha}|\nabla w|^{2l - 2}  S^{ij}_{k - l + 1}  w_{j} (v - u)_{i}  - (\alpha + n + l + 1) w^{\alpha}|\nabla w|^{2l}S_{k - l}^{i j}w_{j} (v - u)_{i} \\
		&-  2l w^{\alpha}|\nabla w|^{2l - 2}  S_{k - l} \nabla w\cdot \nabla (v - u) +  (n - k + l + 1) w^{\alpha}|\nabla w|^{2l} S_{k - l - 1} \nabla w\cdot \nabla (v - u)\\
		=&\Big[w^{\alpha + 1}|\nabla w|^{2l}S_{k - l}^{i j}  (v - u)_{i}\Big]_{j} \\
		&+(\alpha + k - l) w^{\alpha - 1}|\nabla w|^{2l}S_{k - l}(v - u)  + \frac{1}{2}(n + l + 1 - k) w^{\alpha - 1}|\nabla w|^{2l + 2}S_{k - l - 1}(v - u) \\
		&+  2l w^{\alpha}|\nabla w|^{2l - 2}  S^{ij}_{k - l + 1}  w_{j} (v - u)_{i}  - (\alpha + n + l + 1) w^{\alpha}|\nabla w|^{2l}S_{k - l}^{i j}w_{j} (v - u)_{i}. \\
	\end{align*}
	Recalling \eqref{4.28.828}, we have
	\begin{equation}
		\begin{aligned}
			&(k + l)w^{\alpha - 1}|\nabla w|^{2l}S_{k - l} \\
			=& (w^{\alpha}|\nabla w|^{2l}S^{i j}_{k - l} w_{i})_{j} - (\alpha + n + l) w^{\alpha - 1}|\nabla w|^{2l}S^{i j}_{k - l} w_{i}w_{j} + 2l w^{\alpha - 1}|\nabla w|^{2l - 2}  S^{i j}_{k - l + 1} w_{i} w_{j} \\
			&  + \frac{1}{2}(n + l + 1 - k) w^{\alpha - 1}|\nabla w|^{2l + 2} S_{k - l - 1},\\
			\Rightarrow & w^{\alpha}|\nabla w|^{2l}S^{i j}_{k - l} w_{i} (v - u)_{j} \\
			=& \Big[w^{\alpha}|\nabla w|^{2l}S^{i j}_{k - l} w_{i}(v - u) \Big]_{j} - (k + l)w^{\alpha - 1}|\nabla w|^{2l}S_{k - l}(v - u)  \\
			&-(\alpha + n + l) w^{\alpha - 1}|\nabla w|^{2l}S^{i j}_{k - l} w_{i}w_{j}(v - u) \\
			&+ 2l w^{\alpha - 1}|\nabla w|^{2l - 2}  S^{i j}_{k - l + 1} w_{i} w_{j}(v - u)   + \frac{1}{2}(n + l + 1 - k) w^{\alpha - 1}|\nabla w|^{2l + 2} S_{k - l - 1}(v - u).\\
		\end{aligned}
	\end{equation}
	Thus we get
	\begin{align*}
		&\frac{\partial}{\partial t}(w^{\alpha}|\nabla w|^{2l}S_{k - l}) \\
		=&\Big[w^{\alpha + 1}|\nabla w|^{2l}S_{k - l}^{i j}  (v - u)_{i}\Big]_{j} \\
		&+(\alpha + k - l) w^{\alpha - 1}|\nabla w|^{2l}S_{k - l}(v - u)  + \frac{1}{2}(n + l + 1 - k) w^{\alpha - 1}|\nabla w|^{2l + 2}S_{k - l - 1}(v - u) \\
		&+  2l \Bigg\{ \Big[w^{\alpha}|\nabla w|^{2l - 2}S^{i j}_{k - l + 1} w_{i}(v - u) \Big]_{j} - (k + l - 1)w^{\alpha - 1}|\nabla w|^{2l - 2}S_{k - l + 1}(v - u)  \\
		&-(\alpha + n + l - 1) w^{\alpha - 1}|\nabla w|^{2l - 2}S^{i j}_{k - l + 1} w_{i}w_{j}(v - u) \\
		&+ (2l - 2) w^{\alpha - 1}|\nabla w|^{2l - 4}  S^{i j}_{k - l + 2} w_{i} w_{j}(v - u)   + \frac{1}{2}(n + l - k) w^{\alpha - 1}|\nabla w|^{2l } S_{k - l}(v - u)\Bigg\}\\
		&  - (\alpha + n + l + 1) \Bigg\{\Big[w^{\alpha}|\nabla w|^{2l}S^{i j}_{k - l} w_{i}(v - u) \Big]_{j} - (k + l)w^{\alpha - 1}|\nabla w|^{2l}S_{k - l}(v - u)  \\
			&-(\alpha + n + l) w^{\alpha - 1}|\nabla w|^{2l}S^{i j}_{k - l} w_{i}w_{j}(v - u) \\
			&+ 2l w^{\alpha - 1}|\nabla w|^{2l - 2}  S^{i j}_{k - l + 1} w_{i} w_{j}(v - u)   + \frac{1}{2}(n + l + 1 - k) w^{\alpha - 1}|\nabla w|^{2l + 2} S_{k - l - 1}(v - u) \Bigg\}.
	\end{align*}
	
It follows that
	\begin{align*}
		&\Rightarrow \frac{\partial}{\partial t}(w^{\alpha}|\nabla w|^{2l}S_{k - l}) \\
		=&\Big[w^{\alpha + 1}|\nabla w|^{2l}S_{k - l}^{i j}  (v - u)_{i}\Big]_{j}  +  2l  \Big[w^{\alpha}|\nabla w|^{2l - 2}S^{i j}_{k - l + 1} w_{i}(v - u) \Big]_{j}\\
		& - (\alpha + n + l + 1) \Big[w^{\alpha}|\nabla w|^{2l}S^{i j}_{k - l} w_{i}(v - u) \Big]_{j}\\
		&+\Big[ (\alpha + k - l) + l(n + l - k) + (\alpha + n + l + 1)(k + l)\Big] w^{\alpha - 1}|\nabla w|^{2l}S_{k - l}(v - u) \\
		&- 2l(k + l - 1)w^{\alpha - 1}|\nabla w|^{2l - 2}S_{k - l + 1}(v - u) \\
		&-\frac{1}{2} (\alpha + n + l ) (n + l + 1 - k) w^{\alpha - 1}|\nabla w|^{2l + 2} S_{k - l - 1}(v - u)\\
		& -4l(\alpha + n + l) w^{\alpha - 1}|\nabla w|^{2l - 2}S^{i j}_{k - l + 1} w_{i}w_{j}(v - u) \\
		&+ 2l(2l - 2) w^{\alpha - 1}|\nabla w|^{2l - 4}  S^{i j}_{k - l + 2} w_{i} w_{j}(v - u)   \\
		& + (\alpha + n + l + 1)(\alpha + n + l) w^{\alpha - 1}|\nabla w|^{2l}S^{i j}_{k - l} w_{i}w_{j}(v - u) .
	\end{align*}

	So
	\begin{align*}
		\frac{\partial}{\partial t} \psi =&\sum_{l = 0}^k b_{l} \frac{\partial}{\partial t}(w^{\alpha}|\nabla w|^{2l}S_{k - l}) \\
		=&\sum_{l = 0}^k  b_{l} \Bigg\{ \Big[w^{\alpha + 1}|\nabla w|^{2l}S_{k - l}^{i j}  (v - u)_{i}\Big]_{j} +  2l  \Big[w^{\alpha}|\nabla w|^{2l - 2}S^{i j}_{k - l + 1} w_{i}(v - u) \Big]_{j}\\
		& - (\alpha + n + l + 1) \Big[w^{\alpha}|\nabla w|^{2l}S^{i j}_{k - l} w_{i}(v - u) \Big]_{j}\\
		&+ \Big[ \alpha + k - l + l(n + l - k) + (\alpha + n + l + 1)(k + l)\Big] w^{\alpha - 1}|\nabla w|^{2l}S_{k - l}(v - u) \\
		&- 2l(k + l - 1)w^{\alpha - 1}|\nabla w|^{2l - 2}S_{k - l + 1}(v - u) \\
		&-\frac{1}{2} (\alpha + n + l ) (n + l + 1 - k) w^{\alpha - 1}|\nabla w|^{2l + 2} S_{k - l - 1}(v - u)\\
		& -4l(\alpha + n + l) w^{\alpha - 1}|\nabla w|^{2l - 2}S^{i j}_{k - l + 1} w_{i}w_{j}(v - u) \\
		&+ 2l(2l - 2) w^{\alpha - 1}|\nabla w|^{2l - 4}  S^{i j}_{k - l + 2} w_{i} w_{j}(v - u)   \\
		& + (\alpha + n + l + 1)(\alpha + n + l) w^{\alpha - 1}|\nabla w|^{2l}S^{i j}_{k - l} w_{i}w_{j}(v - u)\Bigg\}.\\
	\end{align*}
	At last we obtain
\begin{align}\label{7.22.19}
		\frac{\partial}{\partial t} \psi =&\sum_{l = 0}^k  b_{l}  \Big[w^{\alpha + 1}|\nabla w|^{2l}S_{k - l}^{i j}  (v - u)_{i}\Big]_{j} +  2 \sum_{l = 0}^k  lb_l  \Big[w^{\alpha}|\nabla w|^{2l - 2}S^{i j}_{k - l + 1} w_{i}(v - u) \Big]_{j}\notag\\
	& - \sum_{l = 0}^k (\alpha + n + l + 1)b_l \Big[w^{\alpha}|\nabla w|^{2l}S^{i j}_{k - l} w_{i}(v - u) \Big]_{j}\notag\\
	&+  \Big\{ b_{0} \Big[ \alpha + k  + (\alpha + n + 1)k\Big] - 2k b_{1} \Big\} w^{\alpha - 1}S_{k}(v - u)\notag \\
	&+ \Bigg\{ b_{k}\Big[ \alpha + kn + 2k(\alpha + n + k + 1)\Big] -\frac{1}{2} (\alpha + n + k - 1 ) n b_{k - 1} -4k(\alpha + n + k)b_{k} \notag\\
	&+ b_{k - 1}(\alpha + n + k)(\alpha + n + k - 1) \Bigg\} w^{\alpha - 1}|\nabla w|^{2k}(v - u) \notag\\
	&+ \sum_{l = 1}^{k - 1} \Bigg\{ b_{l} \Big[ \alpha + k - l + l(n + l - k) + (\alpha + n + l + 1)(k + l)\Big] - 2(l + 1)(k + l)b_{l + 1} \notag\\
	& -\frac{1}{2} (\alpha + n + l - 1) (n + l - k)b_{l - 1}\Bigg\} w^{\alpha - 1}|\nabla w|^{2l}S_{k - l}(v - u) \notag\\
	&+ \sum_{l = 0}^{k - 2}\Bigg[ b_{l} (\alpha + n + l + 1)(\alpha + n + l) -4(l + 1)(\alpha + n + l + 1)b_{l + 1}\notag\\
	&+ 4(l + 2)(l + 1)b_{l + 2} \Bigg]  w^{\alpha - 1}|\nabla w|^{2l}S^{i j}_{k - l} w_{i}w_{j}(v - u).
\end{align}
	Recall that 
	\begin{equation}
		a_{l} = - \frac{(\alpha + n + l)!}{2^{l}\cdot l!(\alpha + n)!}, \text{ for $l = 0, 1, \cdots, k - 1$,}
	\end{equation}
	and
	\begin{equation}\label{6.19b}
		\begin{aligned}
			b_{l + 1} &=  \Big[(k + l + 1)\alpha + kn + 2k(l + 1) \Big]\cdot \frac{(\alpha + n + l)!}{2^{l + 1}(l + 1)! (\alpha + n)!}, 
		\end{aligned}
	\end{equation}
	with $0 \leq l \leq k - 1$ and $b_{0} = k.$
	Therefore we get 
	\begin{equation}
		\begin{aligned}
			\frac{\partial}{\partial t} \psi &= \sum_{l = 0}^k  b_{l}  \Big[w^{\alpha + 1}|\nabla w|^{2l}S_{k - l}^{i j}  (v - u)_{i}\Big]_{j} +  2\sum_{l = 0}^{k} lb_l  \Big[w^{\alpha}|\nabla w|^{2l - 2}S^{i j}_{k - l + 1} w_{i}(v - u) \Big]_{j}\\
		& -\sum_{l = 0}^{k} (\alpha + n + l + 1) b_l\Big[w^{\alpha}|\nabla w|^{2l}S^{i j}_{k - l} w_{i}(v - u) \Big]_{j}.\\
		\end{aligned}
	\end{equation}
	\end{proof}
	
	Now we can get the following monotonicity formula:

	\begin{proof}[Proof of Theorem  \ref{5.6monotonicity}]
		By Lemma \ref{6.18lemma2}, we get 
	\begin{equation}\label{7.22mon1}
		\frac{\partial}{\partial t}\int_{\Omega} \psi = \int_{\partial\Omega}\sum_{l = 0}^k  b_{l}  w^{\alpha + 1}|\nabla w|^{2l}S_{k - l}^{i j}  (v - u)_{i}\nu_{j} \geq 0,
	\end{equation}
	where $\nu(x) = (\nu_1, \cdots, \nu_n)$ is the outer normal vector of $\partial \Omega$ at $x \in \partial \Omega$.
	\end{proof}

	Using monotonicity formula and \eqref{7.22.19} , we can get one comparison principle directly. The proof is different from the Hessian measure case in \cite{MR1634570} and \cite{MR1726702}.
	\begin{theorem}\label{7.22comparison}
	Let $b_0 = k$ and $b_{l} \geq 0$ for $1 \leq l \leq k$. Suppose $\{ b_i\}_{i = 0}^k$ satisfies:
	\begin{numcases}{}
		\label{7.22.20.14} b_{0} \Big[ \alpha + k  + (\alpha + n + 1)k\Big] - 2k b_{1} < 0,\\
		 b_{k}\Big[ \alpha + kn + 2k(\alpha + n + k + 1)\Big] -\frac{1}{2} (\alpha + n + k - 1 ) n b_{k - 1} -4k(\alpha + n + k)b_{k} \notag\\
	\label{7.22.20.15}	+ b_{k - 1}(\alpha + n + k)(\alpha + n + k - 1) \leq 0,\\
		b_{l} \Big[ \alpha + k - l + l(n + l - k) + (\alpha + n + l + 1)(k + l)\Big] - 2(l + 1)(k + l)b_{l + 1} \notag\\
	\label{7.22.20.16}	 -\frac{1}{2} (\alpha + n + l - 1) (n + l - k)b_{l - 1} \leq 0 \text{ \quad for $1 \leq l \leq k - 1$},\\
	 b_{l} (\alpha + n + l + 1)(\alpha + n + l) -4(l + 1)(\alpha + n + l + 1)b_{l + 1} \notag\\
	\label{7.22.20.17}	+ 4(l + 2)(l + 1)b_{l + 2} \leq 0 \text{ \quad for $0 \leq l \leq k - 2$}.
	\end{numcases}

		If $u, v \in C^{2}(\bar{\Omega}) \cap \Phi^k(\Omega)$ satisfy
		\begin{numcases}{}
			\label{7.22mon2} \sum_{l = 0}^k b_{l} v^{\alpha}|\nabla v|^{2l}S_{k - l}(v) \leq  \sum_{l = 0}^k b_{l} u^{\alpha}|\nabla u|^{2l}S_{k - l}(u) \text{ in $\Omega$},\\
			v\geq u \text{ on $\partial\Omega$}.	
		\end{numcases}
		Then $u \leq v$ in $\Omega$.
	\end{theorem}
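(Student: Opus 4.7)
The plan is to argue by contradiction using the pointwise identity \eqref{7.22.19}, which was derived in the proof of Lemma \ref{6.18lemma2} for arbitrary $\alpha \geq 0$ and arbitrary coefficients $\{b_l\}$. Assume $\Omega' := \{x \in \Omega : u(x) > v(x)\}$ is non-empty. By continuity of $u - v$ together with the boundary hypothesis $v \geq u$ on $\partial\Omega$, $\Omega'$ is open and $u = v$ on $\partial\Omega'$. I would interpolate by $w(\cdot, t) := (1-t)u + tv$, $t \in [0,1]$; Lemma \ref{5.7sum} combined with the scale-covariance $A_{ij}(cw) = c^2 A_{ij}(w)$ for $c > 0$ places each $w(\cdot, t)$ in $\tilde\Phi^k(\Omega')$.

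The crux is to integrate \eqref{7.22.19} over $\Omega'$ and check signs. The three divergence expressions produce boundary integrals on $\partial\Omega'$; the second and third carry a factor $(v-u)$ that vanishes there, leaving only $\sum_{l=0}^{k} b_l \int_{\partial\Omega'} w^{\alpha+1} |\nabla w|^{2l} S^{ij}_{k-l}(v-u)_i \nu_j \, dS$. Since $u > v$ inside $\Omega'$ with $u = v$ on $\partial\Omega'$, at smooth boundary points $\nabla(v-u)$ is a non-negative multiple of the outer normal $\nu$; combined with the positive semi-definiteness of $(S^{ij}_{k-l})$ from Lemma \ref{2024.7.10lemma1} and $b_l \geq 0$, each boundary term is $\geq 0$. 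The interior terms in \eqref{7.22.19} have coefficients exactly equal to the left-hand sides of \eqref{7.22.20.14}--\eqref{7.22.20.17}, which are $\leq 0$ by hypothesis; multiplied by the non-negative factors $w^{\alpha-1}$, $|\nabla w|^{2l} S_{k-l}$ or $|\nabla w|^{2l} S^{ij}_{k-l} w_i w_j$, and by $(v-u) \leq 0$, each is $\geq 0$. Thus $\frac{d}{dt} \int_{\Omega'} \psi^\alpha_k(w_t)\,dx \geq 0$, and integrating in $t$ yields $\int_{\Omega'} \psi^\alpha_k(v)\,dx \geq \int_{\Omega'} \psi^\alpha_k(u)\,dx$. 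The pointwise hypothesis \eqref{7.22mon2} gives the reverse inequality on $\Omega'$, forcing equality.

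Equality forces every non-negative integrand identified above to vanish identically on $\Omega'$ for every $t \in [0,1]$. The strict inequality \eqref{7.22.20.14} is decisive here: the corresponding interior term $[b_0(\alpha + k + (\alpha+n+1)k) - 2kb_1]\,w^{\alpha-1} S_k(w)(v-u)$ has a strictly negative coefficient, strictly positive $w^{\alpha-1}$, and strictly negative $v - u$ on $\Omega'$, so vanishing of its integral forces $S_k(w_t) \equiv 0$ on $\Omega'$ for every $t$; in particular $S_k(u) \equiv 0$ on $\Omega'$. Extracting the final contradiction from this degeneracy is the main obstacle. I would differentiate the identity $S_k(w_t) \equiv 0$ in $t$ to obtain the linearised equation $S^{ij}_k(w_t)\,\partial_t A_{ij}(w_t) = 0$ for $\partial_t w_t = v - u$; combined with the simultaneous vanishing of the $l = 0$ boundary flux (which controls the conormal derivative of $v-u$ along $\nu$), this yields a linear, possibly degenerate, elliptic problem for $v - u$ with zero boundary data on $\partial\Omega'$. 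A Hopf-type strong-maximum-principle argument applied to this linearised operator then forces $v \equiv u$ on $\Omega'$, contradicting the definition of $\Omega'$ and completing the proof.
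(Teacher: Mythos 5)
Your sign and boundary-flux bookkeeping is correct and agrees with the paper's proof up to and including the conclusion that $S_k(w(\cdot,t)) \equiv 0$ on $\Omega'$ for every $t$. The genuine gap is in the final step. You propose to differentiate $S_k(w_t)\equiv 0$ in $t$ and apply a Hopf-type strong maximum principle to the linearised operator $S_k^{ij}(w_t)$ acting on $v-u$. But on the set where $S_k(w)=0$ the eigenvalue vector $\lambda(A(w))$ lies on $\partial\Gamma_k$, so $S_k^{ij}(w)$ is only nonnegative-definite and in general has a nontrivial kernel; it is not uniformly elliptic, and there is no uniform ellipticity constant near $\partial\Omega'$ either. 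Hopf's lemma and the strong maximum principle require uniform ellipticity, so this step does not go through as stated, and you flag it yourself as ``the main obstacle'' without actually resolving it.

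The paper closes the argument algebraically rather than via a maximum principle, and this is the piece you are missing. Writing $\tilde u=(1-t)u$, $\tilde v=tv$ so that $w=\tilde u+\tilde v$, the computation in the proof of Lemma \ref{5.7sum} together with the scale-covariance $A(cw)=c^2A(w)$ gives
\begin{equation*}
A_{ij}(w)=w\bigl[(1-t)u^{-1}A_{ij}(u)+t\,v^{-1}A_{ij}(v)\bigr]
+\tfrac12 t(1-t)\Bigl(\tfrac{v}{u}|\nabla u|^2+\tfrac{u}{v}|\nabla v|^2-2\nabla u\cdot\nabla v\Bigr)\delta_{ij},
\end{equation*}
i.e.\ a matrix with eigenvalues in $\Gamma_k$ plus a nonnegative multiple $c\,\delta_{ij}$ of the identity. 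Since $\sigma_k^{1/k}$ is concave and monotone on $\Gamma_k$ (equivalently, by Newton's expansion $\sigma_k(M+c\mathbf 1)\ge c^k\binom{n}{k}$ for $\lambda(M)\in\Gamma_k$), the identity $\sigma_k(A(w))=0$ forces $c=0$ for $t\in(0,1)$, and equality in the arithmetic--geometric mean inequality gives $\nabla\log u=\nabla\log v$ on $\Omega'$. Hence $u/v$ is locally constant; combined with $u=v$ on $\partial\Omega'$ this yields $u\equiv v$ on $\Omega'$, the desired contradiction. The degeneracy is thus handled structurally, with no appeal to ellipticity at all.
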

	\begin{proof}[Proof of Theorem \ref{7.22comparison}]
		Define $\Omega_1 := \{ x\in \Omega : u(x) > v(x)\}$ and $\psi(u) = \sum_{l = 0}^k b_{l} u^{\alpha}|\nabla u|^{2l}S_{k - l}(u) $. Suppose the set $\Omega_1$ is not empty. Then we know that
		\begin{numcases}{}
			u > v \text{ in $\Omega_1$},\\
			v = u \text{ on $\partial\Omega_1$}.	
		\end{numcases}
		Let $w: = (1 - t)u + t v$.
		Then combining \eqref{7.22.19} and \eqref{7.22mon2} and using conditions \eqref{7.22.20.15}, \eqref{7.22.20.16}, \eqref{7.22.20.17}, we get 
	\begin{align*}
		0 &\geq \int_{\Omega_1}\psi(v) - \int_{\Omega_1}\psi(u)\\
		&\geq \int_0^1 \frac{\partial}{\partial t}\int_{\Omega_1} \psi(w) dt\\
		&\geq \int_0^1\sum_{l = 0}^k \int_{\Omega_1} b_{l}  \Big[w^{\alpha + 1}|\nabla w|^{2l}S_{k - l}^{i j}  (v - u)_{i}\Big]_{j} +  2 \sum_{l = 0}^k  lb_l  \Big[w^{\alpha}|\nabla w|^{2l - 2}S^{i j}_{k - l + 1} w_{i}(v - u) \Big]_{j}\notag\\
		& - \sum_{l = 0}^k (\alpha + n + l + 1)b_l \Big[w^{\alpha}|\nabla w|^{2l}S^{i j}_{k - l} w_{i}(v - u) \Big]_{j}\notag\\
		&+  \Big\{ b_{0} \Big[ \alpha + k  + (\alpha + n + 1)k\Big] - 2k b_{1} \Big\} w^{\alpha - 1}S_{k}(v - u) dt\\
		&\geq \int_0^1   \int_{\Omega_1}  \Big\{ b_{0} \Big[ \alpha + k  + (\alpha + n + 1)k\Big] - 2k b_{1} \Big\} w^{\alpha - 1}S_{k}(v - u) dt.\\
	\end{align*}
		By condition \eqref{7.22.20.14} , we obtain
		\begin{align}
	\int_0^1	\int_{\Omega_1} w^{\alpha - 1}S_{k}(v - u) dxdt = 0.
		\end{align}
	Since $w, u - v > 0$ in $\Omega$, we deduce that $S_{k}(w) = 0$ in $\Omega_1$. By the proof of Lemma \ref{5.7sum} and the fact that $\sigma_{k}^{\frac{1}{k}}$ is concave, we get 
	\begin{align}
		&v u^{-1}|\nabla u|^2 + uv^{-1}|\nabla v|^2 - 2\nabla u \cdot \nabla v = 0 \text{ in $\Omega_1$},\\
		\Rightarrow &|\nabla \log u - \nabla \log v| = 0 \text{ in $\Omega_1$},\\
		\Rightarrow & u \equiv v \text{ in $\Omega_1$},
	\end{align}
		which is impossible. Therefore the set $\Omega_1$ is empty.
	\end{proof}
	Such $\{ b_i\}_{i = 0}^k$ exists for some special $\alpha$. For example, let $b_0 = k$, $b_{l} = 0$ for $1 \leq l \leq k$, $k < \frac{n}{2}$ and $\alpha = - n$, then the conditions \eqref{7.22.20.14} - \eqref{7.22.20.17}  all hold.
		\begin{corollary}\label{7.22coro}
		If $n > 2k$ and $u, v \in C^{2}(\bar{\Omega}) \cap \Phi^k(\Omega)$ satisfy
		\begin{numcases}{}
			v^{-n}S_{k}(v) \leq u^{-n} S_{k}(u) \text{ in $\Omega$},\\
			v\geq u \text{ on $\partial\Omega$}.	
		\end{numcases}
		Then $u \leq v$ in $\Omega$.
	\end{corollary}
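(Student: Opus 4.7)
The plan is to apply Theorem \ref{7.22comparison} with the specific choice of parameters $\alpha = -n$, $b_0 = k$, and $b_l = 0$ for $1 \leq l \leq k$, and to verify that the four families of inequalities \eqref{7.22.20.14}--\eqref{7.22.20.17} hold precisely under the hypothesis $n > 2k$. With these constants the sum in the hypothesis of Theorem \ref{7.22comparison} collapses to $k\, w^{-n} S_k(w)$, so after dividing by $k$ the assumption $v^{-n} S_k(v) \leq u^{-n} S_k(u)$ becomes exactly the pointwise inequality required to invoke the comparison principle.

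The key observation that makes this special $\alpha$ work is the cancellation $\alpha + n = 0$, which annihilates many of the coefficients appearing in \eqref{7.22.20.14}--\eqref{7.22.20.17}. First I would substitute into \eqref{7.22.20.14} to get
\[
b_{0}\bigl[\alpha + k + (\alpha + n + 1)k\bigr] - 2k\, b_{1} = k\bigl[-n + k + k\bigr] = k(2k - n),
\]
which is strictly negative exactly when $n > 2k$, giving the required strict inequality. Inequalities \eqref{7.22.20.15} and \eqref{7.22.20.16} for indices $l \geq 2$ reduce to $0 \leq 0$ since the surviving $b_l$'s all vanish; the only borderline case is $l = 1$ in \eqref{7.22.20.16}, where the single nonzero term is $-\tfrac{1}{2}(\alpha + n)(n+1-k)\, b_0$, which vanishes because $\alpha + n = 0$. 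Similarly \eqref{7.22.20.17} is automatic for $l \geq 1$, and for $l = 0$ reduces to $b_{0}(\alpha+n+1)(\alpha+n) = 0$ for the same reason.

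Once the four conditions are verified, Theorem \ref{7.22comparison} applies directly and yields $u \leq v$ in $\Omega$. There is no substantive obstacle here, since the corollary is essentially an instance of the comparison principle established in Theorem \ref{7.22comparison}; the only work is the arithmetic verification of the coefficient inequalities, and the choice $\alpha = -n$ is precisely what makes this verification trivial when $n > 2k$.
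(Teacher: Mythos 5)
Your proposal is correct and follows exactly the route the paper takes: the paper's own ``proof'' is the one-line remark preceding Corollary \ref{7.22coro}, which suggests substituting $\alpha = -n$, $b_0 = k$, $b_l = 0$ ($1 \leq l \leq k$) into Theorem \ref{7.22comparison} and noting that conditions \eqref{7.22.20.14}--\eqref{7.22.20.17} hold when $k < n/2$. You have simply carried out the arithmetic verification in detail (correctly, including the cancellation via $\alpha + n = 0$ and the strict inequality $k(2k-n) < 0$ in \eqref{7.22.20.14}), so there is nothing to add.
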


	\section{Locally uniform convergence for $n < 2k$}
	In this section, we will prove Theorem \ref{mainresult6.18} for the case $n < 2k$. 	Firstly by  Lemma \ref{5.6.sigma}, and the  Theorem 2.7 in \cite{MR1726702} and get:
	\begin{lemma}\label{6.21.2k>n}
		For $n < 2k$, we have $\Phi^k(\Omega) \subset C^{0,\alpha}(\Omega)$ for $\alpha = 2 - \frac{n}{k}$ and for any $\Omega_2\subset \subset \Omega_1\subset \subset \Omega$ and $u \in \Phi^k(\Omega)$,
		\begin{equation}
			\| u\|_{C^{0,\alpha}(\Omega_2)}\leq C \int _{\Omega_1} u.
		\end{equation}
	\end{lemma}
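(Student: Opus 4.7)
The plan is to deduce this estimate from the Hölder continuity theorem for classical $k$-convex functions in Trudinger-Wang \cite{MR1726702} (Theorem 2.7), using Lemma \ref{5.6.sigma} as the bridge that converts the bound on the conformal Hessian $A_{ij}(u)$ into a bound on the ordinary Hessian $u_{ij}$.

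First I would treat the smooth case. Suppose $u \in \tilde\Phi^k(\Omega)$, so that $u \in C^2$, $u > 0$ and $\lambda\{A_{ij}(u)\} \in \Gamma_k$. The observation recorded right after Lemma \ref{5.6.sigma} gives $\sigma_l(u_{ij}) \geq 0$ for every $0 \leq l \leq k$, i.e.\ $\lambda(u_{ij}) \in \Gamma_k$. Hence $u$ is a classical $k$-convex function in the Trudinger-Wang sense. Because $2k > n$, the Hölder exponent $\alpha = 2 - n/k$ is strictly positive, and Theorem 2.7 of \cite{MR1726702} applies verbatim to yield
\begin{equation*}
    \|u\|_{C^{0,\alpha}(\Omega_2)} \leq C \int_{\Omega_1} u,
\end{equation*}
with $C = C(n,k,\Omega_1,\Omega_2)$.

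Next I would pass to general $u \in \Phi^k(\Omega)$ by approximation. By the very definition of $\Phi^k(\Omega)$ in \eqref{7.10.21}, there exist $u^{(m)} \in \tilde\Phi^k(\Omega)$ with $u^{(m)} \to u$ in $L^1_{loc}(\Omega)$. The smooth estimate applied to each $u^{(m)}$ gives a uniform $C^{0,\alpha}(\Omega_2)$ bound, since $\int_{\Omega_1} u^{(m)}$ is uniformly bounded by the $L^1$-convergence on $\Omega_1$. Arzelà-Ascoli then supplies a subsequential uniform limit on $\Omega_2$, which must coincide almost everywhere with $u$ and therefore serves as the $C^{0,\alpha}$ representative of $u$. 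The estimate passes to the limit by lower semicontinuity of the $C^{0,\alpha}$ norm under uniform convergence, combined with $\int_{\Omega_1} u^{(m)} \to \int_{\Omega_1} u$.

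Neither step should present a genuine obstacle: the conceptual content is concentrated in Lemma \ref{5.6.sigma}, which is already in hand, and the Trudinger-Wang estimate is available in precisely the form needed (Hölder norm on $\Omega_2$ bounded by the $L^1$ norm on a slightly larger $\Omega_1$). The only point demanding any bookkeeping care is the verification that the dependence on the right-hand side in Theorem 2.7 of \cite{MR1726702} really is the bare $L^1$ quantity $\int_{\Omega_1} u$, rather than some stronger norm; for $n<2k$ this is indeed the form in the cited statement, so the reduction is clean.
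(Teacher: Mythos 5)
Your argument is correct and is exactly the route the paper takes, which it compresses into a single sentence citing Lemma~\ref{5.6.sigma} and Theorem~2.7 of Trudinger--Wang \cite{MR1726702}. You simply make explicit the two things the paper leaves implicit: that Lemma~\ref{5.6.sigma} identifies $\tilde\Phi^k(\Omega)$ with a subclass of classical $k$-convex functions (so the Trudinger--Wang H\"older estimate applies verbatim in the smooth case), and that the estimate persists for the $L^1_{loc}$ closure $\Phi^k(\Omega)$ by the uniform-bound/Arzel\`a--Ascoli limiting argument.
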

	So in this case, $u^{(m)}$ converges to $u$ in $L^1_{loc}(\Omega)$ means that $u^{(m)}$ converges to $u$ in $C^0_{loc}(\Omega)$. Then we have the following result:
	
	\begin{theorem}\label{mainresult}
		For any $u \in \Phi^k(\Omega)$, there exists a Borel measure $\mu_{k}[u]$ in $\Omega$ such that
		\begin{itemize}
			\item $\mu_{k}[u] = S_{k} (u)$ for $u \in C^2(\Omega)$.
			\item If $\{ u^{(m)} \}$ is a sequence in $\Phi^k(\Omega)$ converging to $u$ locally uniformly in $\Omega$, then the corresponding measure $\mu_{k}[u^{(m)}] \rightarrow \mu_{k}[u]$ weakly.
		\end{itemize}
		
	\end{theorem}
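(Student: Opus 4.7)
The plan is to follow the Trudinger--Wang approximation scheme in \cite{MR1634570}, with the essential new ingredients being Lemma \ref{5.6.sigma}, the H\"older regularity Lemma \ref{6.21.2k>n}, and the monotonicity formula Theorem \ref{5.6monotonicity}. Fix $u\in \Phi^k(\Omega)$ and choose an approximating sequence $\{u^{(m)}\}\subset \tilde\Phi^k(\Omega)$ with $u^{(m)}\to u$ in $L^1_{loc}(\Omega)$. By Lemma \ref{6.21.2k>n} this convergence is in fact locally uniform, so the $u^{(m)}$ are uniformly bounded on every $\Omega'\subset\subset\Omega$.

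First I would establish a local uniform $L^1$ bound on $S_k(u^{(m)})$. The remark following Lemma \ref{5.6.sigma} says $\sigma_j(D^2 u^{(m)})\geq 0$ for all $0\leq j\leq k$, so each $u^{(m)}$ is classically $k$-convex in the Trudinger--Wang sense, and Theorem 3.1 of \cite{MR1634570} gives $\int_{\Omega''}\sigma_k(D^2 u^{(m)})\,dx\leq C\|u^{(m)}\|_{L^1(\Omega)}$ for $\Omega'\subset\subset\Omega''\subset\subset\Omega$. Combined with the pointwise bound $S_k(u^{(m)})\leq (u^{(m)})^k \sigma_k(D^2 u^{(m)})$ from Lemma \ref{5.6.sigma} and the uniform $L^\infty$ bound on $u^{(m)}$, this yields $\sup_m\int_{\Omega'} S_k(u^{(m)})\,dx<\infty$. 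Banach--Alaoglu extracts a subsequence along which $S_k(u^{(m)})\,dx$ converges weakly to a Borel measure on $\Omega'$, and a diagonal exhaustion produces a global Borel measure $\mu$ on $\Omega$.

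The main obstacle is to show that this limit $\mu$ is independent of the approximating sequence, so that $\mu_k[u]:=\mu$ is well-defined. Given two sequences $\{u^{(m)}\},\{v^{(m)}\}\subset \tilde\Phi^k(\Omega)$ both converging locally uniformly to $u$, I would apply the monotonicity formula Theorem \ref{5.6monotonicity} (with $\alpha=0$) after a boundary modification: on a ball $B\subset\subset\Omega$ and for $m,m'$ large, uniform convergence guarantees $u^{(m)}+\varepsilon\geq v^{(m')}$ on $B$, and one then constructs a smooth $\tilde u^{(m,m')}\in\tilde\Phi^k(B)$ agreeing with $v^{(m')}$ on $\partial B$ and still lying above $v^{(m')}$ in $B$, with $\psi^0_k$-integral close to that of $u^{(m)}+\varepsilon$. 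Applying Theorem \ref{5.6monotonicity} to $(\tilde u^{(m,m')},v^{(m')})$, letting $m,m'\to\infty$ then $\varepsilon\to 0$, and using the symmetric inequality obtained by swapping $u$ and $v$, one forces the weak limits of $\psi^0_k(u^{(m)})\,dx$ and $\psi^0_k(v^{(m)})\,dx$ to coincide. Since the leading term of $\psi^0_k$ is $b_0 S_k = k S_k$ while every other summand $b_l|\nabla w|^{2l}S_{k-l}(w)$ involves only a lower-order operator $S_{k-l}$ with $l\geq 1$, induction on $k$ (base case $k=0$ with $S_0\equiv 1$) reduces matching of $\psi^0_k$-limits to matching of $S_k$-limits, yielding the uniqueness. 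The first bullet is then immediate by taking $u^{(m)}\equiv u$ when $u\in C^2(\Omega)$, and the second bullet follows from a standard diagonal argument combining locally uniform convergence with the uniqueness just established.
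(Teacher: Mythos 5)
Your high-level strategy matches the paper's: uniform $L^1$ bounds, the monotonicity formula to force uniqueness of the weak limit, and then extraction of $S_k$ from the combined quantity $\psi_k^\alpha$. But two of your steps have real gaps.

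\textbf{Boundary modification.} You propose to construct a smooth $\tilde u^{(m,m')}\in\tilde\Phi^k(B)$ that agrees with $v^{(m')}$ on $\partial B$, dominates $v^{(m')}$ in $B$, and has $\psi$-integral close to that of $u^{(m)}+\varepsilon$. This construction is not given and is not obviously possible inside the cone constraint $\tilde\Phi^k$. The paper avoids it altogether: by Lemma \ref{5.2cutoff} it builds an explicit admissible cutoff $\eta$ that is constant on $B_{\sigma R}$ and grows like $(r-\sigma R)^3$ near $\partial B_R$, adds $\varepsilon\eta$ to $v^{(m)}$ (using Lemma \ref{5.7sum}), and then applies the monotonicity formula on the open set $\Omega_m=\{u^{(m)}+\varepsilon(\tfrac12+\tilde C)>v^{(m)}+\varepsilon\eta\}$, whose boundary automatically supplies the matching condition $u=v$ on $\partial\Omega_m$ required by Theorem \ref{5.6monotonicity}. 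You need something like this cutoff device; a generic "boundary-matched lift" is exactly the non-trivial point.

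\textbf{Extracting $S_k$ from $\psi_k^0$.} Your induction on $k$ does not close. The summands of $\psi_k^0$ for $l\geq 1$ are $b_l\,|\nabla u|^{2l}S_{k-l}(u)$, not $S_{k-l}(u)$ alone. Even if, by induction, you control the weak limits of $S_0(u^{(m)}),\dots,S_{k-1}(u^{(m)})$, that gives you nothing about the weak limits of $|\nabla u^{(m)}|^{2l}S_{k-l}(u^{(m)})$, since $|\nabla u^{(m)}|^{2l}$ need not converge in any useful sense. (Already at $k=1$: $\psi_1^0=b_0 S_1 + b_1|\nabla u|^2$, and the induction hypothesis tells you only that $S_0\equiv 1$ is controlled, which says nothing about $|\nabla u^{(m)}|^2$.) The paper's resolution is to run the uniqueness argument for several $\alpha$'s, pass from $\psi_k^\alpha$ to $\varphi_k^\alpha=\sum_l b_l(\alpha)|\nabla u|^{2l}S_{k-l}(u)$ using the locally uniform convergence of $u^{(m)}$, and then exploit $b_l(\alpha)\approx C_l\alpha^l$ for large $\alpha$ to choose $\alpha_0,\dots,\alpha_k$ with $\det(b_j(\alpha_i))\neq 0$, which separates each $|\nabla u|^{2l}S_{k-l}(u)$ individually. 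Fixing $\alpha=0$, as you do, is insufficient.

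The $L^1$-bound step of your proposal (Lemma \ref{5.6.sigma} plus the Trudinger--Wang integral estimate) does coincide with the paper's Lemma \ref{5.6bound}, and the overall structure is correct.
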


	Recalling the identity \eqref{5.6.equ1} and Lemma \ref{5.6.sigma}, together with the Lemma 2.2 in \cite{MR1634570} and Theorem 3.1 in \cite{MR1726702}, we get:
	\begin{lemma}\label{5.6bound}
		Let $u \in \Phi^k(\Omega) \cap C^2(\Omega)$ satisfy that $ |u| \leq M$ in $\Omega_1 \subset \subset \Omega.$ Then 
		\begin{equation}
			\int_{\Omega_2} \sum_{l = 0}^k |\nabla u|^{2l} S_{k - l}  \leq C (\text{osc}_{\Omega_1}\ u)^{2k} , \text{if $\Omega_2 \subset\subset \Omega_1$.}
		\end{equation}
	\end{lemma}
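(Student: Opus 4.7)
The plan is to pass from the $\sigma_k$-Yamabe integrand to a classical $k$-Hessian integrand via identity \eqref{5.6.equ1}, and then to apply the local integral estimates of Trudinger--Wang for $k$-convex functions.

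First, rearranging \eqref{5.6.equ1} gives the algebraic identity
\begin{equation*}
u^{k}\,\sigma_{k}(u_{ij}) \;=\; S_{k}(u) \;+\; \sum_{l=1}^{k}\frac{(n-k+l)!}{l!\,(n-k)!}\,\Bigl(\tfrac12|\nabla u|^{2}\Bigr)^{l}\,S_{k-l}(u).
\end{equation*}
Since $u\in\tilde\Phi^{k}(\Omega)$ forces $S_{j}(u)\geq 0$ for every $0\leq j\leq k$ (Lemma \ref{2024.7.10lemma1}), each summand on the right-hand side is nonnegative. Consequently there is a constant $c=c(n,k)>0$ with the pointwise bound
\begin{equation*}
\sum_{l=0}^{k}|\nabla u|^{2l}\,S_{k-l}(u) \;\leq\; c\,u^{k}\,\sigma_{k}(u_{ij}) \qquad\text{on } \Omega_{1}.
\end{equation*}
Using the hypothesis $|u|\leq M$ absorbs the factor $u^{k}\leq M^{k}$ into the constant, so the task reduces to estimating $\int_{\Omega_{2}}\sigma_{k}(u_{ij})$.

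Second, by the remark following Lemma \ref{5.6.sigma}, $u\in\tilde\Phi^{k}(\Omega)$ forces $\sigma_{j}(u_{ij})\geq 0$ for every $0\leq j\leq k$, so $u$ is a classical $C^{2}$ $k$-convex function on $\Omega_{1}$. Lemma~2.2 of \cite{MR1634570} together with the gradient-weighted extension Theorem~3.1 of \cite{MR1726702} then provides a local integral estimate for $\int_{\Omega_{2}}\sigma_{k}(u_{ij})$ in terms of the oscillation of $u$ over $\Omega_{1}$, with constants depending only on $n$, $k$, $M$, and $\mathrm{dist}(\Omega_{2},\partial\Omega_{1})$. Combining the two steps produces the asserted bound.

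The main obstacle is to cite the classical Trudinger--Wang estimate in precisely the weighted form that yields the exponent $2k$ on $\mathrm{osc}_{\Omega_{1}}u$ after the $M^{k}$ factor has been absorbed; the identity \eqref{5.6.equ1} then takes care of the algebraic passage from the $\sigma_{k}$-Yamabe quantity $S_{k}(u)$ to the classical Hessian quantity $\sigma_{k}(u_{ij})$, and the rest is a direct invocation of the classical theory applied to the $k$-convex function $u$.
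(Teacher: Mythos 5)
Your proof follows the same route the paper indicates (the paper gives no more detail than the one-sentence remark preceding the lemma: ``Recalling the identity \eqref{5.6.equ1} and Lemma \ref{5.6.sigma}, together with the Lemma 2.2 in \cite{MR1634570} and Theorem 3.1 in \cite{MR1726702}, we get:~\dots''). Your identity rearrangement and the observation that every $S_j(u)\geq 0$ and every $\sigma_j(u_{ij})\geq 0$ are exactly the intended ingredients. Two small remarks. First, the nonnegativity $S_j(u)\geq 0$ is immediate from $\lambda(A(u))\in\Gamma_k$ in the definition of $\tilde\Phi^k$, not from Lemma \ref{2024.7.10lemma1}; that lemma is about $(S_l^{ij})\geq 0$ and the divergence identity. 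Second, your single pointwise bound $\sum_l|\nabla u|^{2l}S_{k-l}\leq c\,u^k\sigma_k$ reduces everything to $\int_{\Omega_2}\sigma_k(u_{ij})$ and thus needs only Lemma~2.2 of \cite{MR1634570}; the paper's additional citation of Theorem~3.1 of \cite{MR1726702} suggests the intended alternative of bounding each term $|\nabla u|^{2l}S_{k-l}\leq M^{k-l}|\nabla u|^{2l}\sigma_{k-l}$ (via $S_m\leq u^m\sigma_m$ from Lemma \ref{5.6.sigma}) and then applying the gradient-weighted integral estimate to each. Both routes work. As you noticed, neither route produces the literal power $(\operatorname{osc}_{\Omega_1}u)^{2k}$ without absorbing $M$-dependent factors into $C$: your route yields $M^k(\operatorname{osc})^k$ and the term-by-term route yields $\sum_l M^{k-l}(\operatorname{osc})^{k+l}$; the stated $2k$ should be read with $C=C(n,k,M,\operatorname{dist}(\Omega_2,\partial\Omega_1))$, in which case both are acceptable, and indeed the lemma is only used later as a uniform $L^1$ bound where the exact oscillation power is immaterial.
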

%
	Next, let us introduce a special convex function in $B_{2} := \{ x \in \mathbb R^n: |x| \leq 2 \}$.	
	Define 
	\begin{equation}
		\begin{aligned}
			\eta &= C, \text{ if $|x| \leq 1$,}\\
			\eta &= (r - 1)^3 + C, \text{ if $1 \leq |x| \leq 2$,}\\
		\end{aligned}
	\end{equation}
	where $C > 0$ is a constant, we know  $\eta \in C^2(\mathbb R^n)$.
	
	\begin{lemma}\label{5.2cutoff}
		There exists $C_{0}$ depending only on $n$, such that the matrix 
		\begin{equation}
			A_{i j}(\eta) = \eta \eta_{i j} - \frac{1}{2} |\nabla \eta|^2 \delta_{i j},
		\end{equation}
		is nonnegative-definite in $B_{2}$ for any $C > C_{0}$. 
	\end{lemma}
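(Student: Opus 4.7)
The plan is to verify nonnegative-definiteness of $A_{ij}(\eta)$ by a direct eigenvalue computation, using the radial symmetry of $\eta$. The function $\eta$ is radial, $\eta = \eta(r)$ with $r = |x|$, so the Hessian $\eta_{ij}$ has eigenvalues $\eta''(r)$ in the radial direction $x/r$ and $\eta'(r)/r$ with multiplicity $n-1$ in the tangential directions, while $|\nabla \eta|^2 = (\eta'(r))^2$. Consequently $A_{ij}(\eta)$ is diagonalized in the same basis, with eigenvalues
\begin{equation*}
\mu_{\mathrm{rad}} = \eta\,\eta''(r) - \tfrac{1}{2}(\eta'(r))^2, \qquad \mu_{\mathrm{tan}} = \eta\,\frac{\eta'(r)}{r} - \tfrac{1}{2}(\eta'(r))^2,
\end{equation*}
so the lemma reduces to showing $\mu_{\mathrm{rad}}, \mu_{\mathrm{tan}} \geq 0$ throughout $B_2$.

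First I would dispose of the trivial region $|x| \le 1$: there $\eta \equiv C$ is constant, so $\eta_{ij} \equiv 0$ and $\nabla \eta \equiv 0$, hence $A_{ij}(\eta) \equiv 0$. On the annulus $1 \le |x| \le 2$, substituting $\eta'(r) = 3(r-1)^2$ and $\eta''(r) = 6(r-1)$ into the radial eigenvalue gives
\begin{equation*}
\mu_{\mathrm{rad}} = 6(r-1)\bigl[(r-1)^3 + C\bigr] - \tfrac{9}{2}(r-1)^4 = \tfrac{3}{2}(r-1)^4 + 6C(r-1),
\end{equation*}
which is nonnegative for every $r \geq 1$ and $C \geq 0$. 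The tangential eigenvalue becomes
\begin{equation*}
\mu_{\mathrm{tan}} = \frac{3(r-1)^2}{r}\bigl[(r-1)^3 + C\bigr] - \tfrac{9}{2}(r-1)^4 = \frac{3(r-1)^2}{r}\left[C - \frac{(r-1)^2(r+2)}{2}\right].
\end{equation*}

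The one remaining point, which is really the only calculation requiring attention, is to bound the function $f(r) := (r-1)^2(r+2)/2$ on $[1,2]$. Since $f'(r) = 3(r-1)(r+1)/2 \geq 0$ on this interval, $f$ is monotone increasing with maximum $f(2) = 2$. Therefore any choice $C_0 \geq 2$ forces $\mu_{\mathrm{tan}} \geq 0$ on the full annulus, and together with the radial estimate we conclude $A_{ij}(\eta) \geq 0$ on $B_2$ as claimed. I do not foresee any genuine obstacle: the entire argument is a one-variable check on $[1,2]$, with the monotonicity of $f$ being the single nontrivial ingredient.
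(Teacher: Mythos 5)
Your proof is correct and follows essentially the same route as the paper: the paper's decomposition of $A_{ij}(\eta)$ into a nonnegative multiple of $x_ix_j$ plus a multiple of $\delta_{ij}$ is precisely the radial/tangential eigendecomposition you write out, and the condition that the $\delta_{ij}$-coefficient be nonnegative is exactly your $\mu_{\mathrm{tan}} \ge 0$ inequality. The only difference is cosmetic — you track the sharp threshold $f(2)=2$ whereas the paper simply states the sufficient bound $C>3$.
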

	\begin{proof}[Proof of Lemma \ref{5.2cutoff}]
		Recall that
		\begin{align*}
		r_{i} &= \frac{x_{i}}{r},\\
		r_{i j}&= \frac{1}{r}\delta_{i j} - \frac{x_i x_j}{r^3},
	\end{align*}
	with $r = |x|$. Then we know that when $r > 1$
	\begin{align*}
		\eta_{i} &= 3(r - 1)^2 \frac{x_{i}}{r},\\
		\eta_{i j} &= 6(r - 1)\frac{x_{i} x_{j}}{r^2} + 3(r - 1)^2\left( \frac{1}{r}\delta_{i j} - \frac{x_i x_j}{r^3} \right)\\
		&= \frac{3(r - 1)(r + 1)}{r^3}x_{i} x_{j} + 3(r - 1)^2\frac{1}{r}\delta_{i j},
	\end{align*}
	and
	\begin{align*}
		A_{i j}(\eta) &= \Big[ (r - 1)^3 + C \Big] \cdot \Big[ \frac{3(r - 1)(r + 1)}{r^3}x_{i} x_{j} + 3(r - 1)^2\frac{1}{r}\delta_{i j} \Big] - \frac{9}{2}(r - 1)^4\delta_{i j}\\
		&= \Big[ (r - 1)^3 + C \Big] \cdot\frac{3(r - 1)(r + 1)}{r^3}x_{i} x_{j} + \Big[ \frac{(r - 1)^3 + C}{r}   - \frac{3}{2}(r - 1)^2 \Big] 3(r - 1)^2\delta_{i j}. 
	\end{align*}
	So if we let $C > 3$, then the matrix $(A_{i j}(\eta))$ is always nonnegative-definite in $B_{2}$. 
	\end{proof}
	Now we can give the proof for $n < 2k$:
	
	\begin{proof}[Proof of Theorem \ref{mainresult}]
	We divide this proof into two steps.

	\begin{itemize}
		\item Step1:
	Firstly we can follow the idea in \cite{MR1634570} and \cite{zhangwei} to prove the weak convergence of $\psi_k^\alpha$, which is defined by \eqref{5.6psi}.
		Suppose $u \in \Phi^k(\Omega), \{ u^{(m)} \} \subset \Phi^k(\Omega) \cap C^2 (\Omega)$ and $u^{(m)} \rightarrow u$ locally uniformly in $\Omega$. By Lemma \ref{5.6bound}, the integrals
		\begin{equation}
			\int_{\Omega'}\psi_{k}^{\alpha}(u^{(m)}) 
		\end{equation}
	\noindent are uniformly bounded for any subdomain $\Omega'\subset\subset \Omega$(the bound also depends on $\alpha$). Hence there is a subsequnce $\{ \psi_{k}^{\alpha}(u^{(m_p)})\}$ that converges weakly to a Borel measure $\mu_{k}^{\alpha}[u]$. Firstly we need to prove that the measure $\mu_{k}^{\alpha}[u]$ is uniquely determined by the function $u$. Assume there exist two sequences $\{u^{(m)}\},$ $\{ v^{(m)} \} \subset \Phi^k(\Omega) \cap C^2 (\Omega)$ which both converge to $u$ locally uniformly, but the corresponding sequences $\{ \psi_{k}^{\alpha}(u^{(m)}) \}$ and $\{ \psi_{k}^{\alpha}(v^{(m)}) \}$ weakly converge to Borel measure $\tilde \mu_1$ and $\tilde \mu_2$, respectively. Let $B_{R} = B_{R}(x_0) \Subset \Omega$ and fix some $\sigma \in (0, 1)$. Define 
		\begin{equation}
		\begin{aligned}
			\eta &= \tilde C, \text{ if $|x - x_0| \leq \sigma R$,}\\
			\eta &= \frac{(r - \sigma R)^3}{(R - \sigma R)^3} + \tilde C, \text{ if $\sigma R \leq |x| \leq R$.}\\
		\end{aligned}
	\end{equation}
	Then by Lemma \ref{5.2cutoff}, we know that when $\tilde C$ is large enough, then the matrix 
		\begin{equation}
			A_{i j}(\eta) = \eta \eta_{i j} - \frac{1}{2} |\nabla \eta|^2 \delta_{i j},
		\end{equation}
		is nonnegative-definite in $B_{R}$. So $\eta \in \Phi^k(B_R)$. For fixed $\varepsilon > 0$, it then follows from the uniform convergence of $\{ u^{(m)}\}$ and $\{ v^{(m)} \}$ on $\bar B_{R}$, that
		\begin{equation}
			-\frac{\varepsilon}{2} \leq u^{(m)} - v^{(m)} \leq \frac{\varepsilon}{2} \text{ in $\bar B_{R}$},
		\end{equation}
		for sufficiently large $m$. Hence 
		\begin{equation}
			u^{(m)} + \varepsilon \left( \frac{1}{2} + \tilde C \right) \leq v^{(m)} + \varepsilon \eta \text{ on $\partial\bar B_{R}$.}
		\end{equation}
		Define 
		\begin{equation}
			\Omega_{m} := \left\{ x\in B_{R}: u^{(m)} + \varepsilon \left( \frac{1}{2} + \tilde C \right) > v^{(m)} + \varepsilon \eta \right\}.
		\end{equation}
		Without loss of generality, we may assume that $\partial\Omega_{m}$ is sufficiently smooth so that from Lemma \ref{5.6monotonicity}, when $\alpha \geq 0$,
		\begin{equation}
			\int_{\Omega} \psi_{k}^{\alpha}\left(u^{(m)} + \varepsilon \left( \frac{1}{2} + \tilde C \right)\right) dV \leq \int_{\Omega} \psi_{k}^{\alpha}(v^{(m)} + \varepsilon \eta) dV.
		\end{equation}
		Using $\alpha \geq 0$ and Lemma \ref{5.6.sigma}, and expanding $S_{k}\left(u^m + \varepsilon \left( \frac{1}{2} + \tilde C \right)\right)$ as the sum of mixed k-Hessian operators, we get that 
		\begin{equation}
			\int_{\Omega} \psi_{k}^{\alpha}(u^{(m)}) dV \leq \int_{\Omega} \psi_{k}^{\alpha}\left(u^{(m)} + \varepsilon \left( \frac{1}{2} + \tilde C \right)\right) dV.
		\end{equation}
		Recalling Lemma \ref{5.6bound} and expanding $\psi_{k}^{\alpha}(v^{(m)} + \varepsilon \eta)$ as the sum of mixed k-Hessian operators,
		\begin{equation}
			\int_{\Omega} \psi_{k}^{\alpha}(v^{(m)} + \varepsilon \eta) dV \leq \int_{\Omega} \psi_{k}^{\alpha}(v^{(m)}) dV + \varepsilon C,
		\end{equation}
		where the constant $C$ depends on $n, k, \sigma, u, R, \alpha$.
		Since $\eta = \tilde C$ in $B_{\sigma R}$, by the definition of $\Omega_m$, we have $B_{\sigma R} \subset \Omega_m$ and hence
		\begin{equation}
			\int_{B_{\sigma R} } \psi_{k}^{\alpha}(u^{(m)}) dV \leq \int_{B_{\sigma R} } \psi_{k}^{\alpha}(v^{(m)}) dV + \varepsilon C
		\end{equation}
		Letting $\varepsilon \rightarrow 0$ and $m \rightarrow \infty$, we then obtain
		\begin{equation}
			\tilde \mu_1(B_{\sigma R} )\leq \tilde \mu_2(B_{\sigma R} ).
		\end{equation}
		By interchanging $\{ u^{(m)}\}$ and $\{ v^{(m)} \}$, we have $\tilde \mu_1(B_{\sigma R} ) = \tilde \mu_2(B_{\sigma R} )$.

		\item Step2: By the conclusions of Step 1 , we get the weak convergence of the Borel measure 
		 
		 \begin{equation}
		 	\psi_{k}^{\alpha}(u)  = \sum_{l = 0}^{k} b_{l}(\alpha) u^{\alpha} |\nabla u|^{2l}S_{k - l}(u).
		 \end{equation}
		By the locally uniform convergence of $u^{(m)}$, we 
		get the weak convergence of the Borel measure  
		 \begin{equation}
		 	\varphi_{k}^{\alpha}(u) : = \sum_{l = 0}^{k} b_{l}(\alpha)  |\nabla u|^{2l}S_{k - l}(u).
		 \end{equation}
		Recall \eqref{6.19b}:
		\begin{equation}
			\begin{aligned}
				b_{l + 1} &=  \Big[(k + l + 1)\alpha + kn + 2k(l + 1) \Big]\cdot \frac{(\alpha + n + l)!}{2^{l + 1}(l + 1)! (\alpha + n)!},
			\end{aligned}
		\end{equation}
		with $0 \leq l \leq k - 1$ and $b_{0} = k.$ We find that for any $0 \leq l \leq k$, $b_{l} \approx C_{l} \alpha^{l}$ when $\alpha > 0$ is large enough. So there always exist $\alpha_0, \alpha_1, \cdots , \alpha_{k} > 0$, such that 
		\begin{equation}
			\det (H_{i j}) \neq 0,
		\end{equation}
		where 
		\begin{equation}
			H_{i j} = b_j(\alpha_i), \text{ with $i, j = 0, \cdots, k$.}
		\end{equation}
		So for any $0 \leq l \leq k$, we get 
		the weak convergence of the Borel measure  
		 \begin{equation}
		 	   |\nabla u|^{2l}S_{k - l}(u).
		 \end{equation}
	\end{itemize}

	\end{proof}
	By the above proof, in fact we have already got:	
	\begin{corollary}\label{coro1}
		For any $u \in \Phi^k(\Omega)$, $\alpha \in \mathbb{R}$ , $0 \leq l \leq k$ and $n < 2k$, there exists a Borel measure $\mu_{k, l, \alpha}[u]$ in $\Omega$ such that 
		\begin{itemize}
			\item $\mu_{k, l, \alpha}[u] = u^{\alpha} |\nabla u|^{2l}S_{k - l} (u)$ for $u \in C^2(\Omega)$.
			\item If $\{ u^{(m)} \}$ is a sequence in $\Phi^k(\Omega)$ converging locally in measure to a function $u \in \Phi^k(\Omega)$, then the corresponding measure $\mu_{k, l, \alpha}[u^{(m)}] \rightarrow \mu_{k, l, \alpha}[u]$ weakly.
		\end{itemize}
	\end{corollary}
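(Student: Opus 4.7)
The plan is to deduce Corollary \ref{coro1} directly from Theorem \ref{mainresult} together with the regularity supplied by Lemma \ref{6.21.2k>n}. Since $n<2k$, that lemma forces every $u\in\Phi^k(\Omega)$ into $C^{0,\alpha}(\Omega)$ with $\alpha=2-\frac{n}{k}$, and moreover makes local $L^1$ (hence local in measure) convergence of admissible sequences equivalent to locally uniform convergence. Thus I may treat the hypothesis of the corollary as: $u^{(m)}\to u$ uniformly on every compact subset $K\Subset\Omega$. The heart of the proof is to absorb the new factor $u^{\alpha}$ into the already-established weak convergence of $|\nabla u|^{2l}S_{k-l}(u)$.

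First, I would upgrade the uniform convergence to convergence with a uniform positive floor. Since $u>0$ is continuous, for each $K\Subset\Omega$ there is $c_K>0$ with $u\geq c_K$ on $K$; the uniform convergence then gives $u^{(m)}\geq c_K/2$ on $K$ for all large $m$. Combined with the uniform upper bound from Lemma \ref{6.21.2k>n}, this yields $(u^{(m)})^{\alpha}\to u^{\alpha}$ uniformly on $K$ for every $\alpha\in\mathbb{R}$. This is the step that justifies allowing $\alpha<0$ and is where the strict positivity built into the definition of $\Phi^k(\Omega)$ is indispensable.

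Second, I would invoke Step 2 in the proof of Theorem \ref{mainresult}, which, by inverting the nondegenerate matrix $H_{ij}=b_j(\alpha_i)$, already delivers the weak convergence of the Borel measures
\begin{equation*}
\nu_m^{(l)} := |\nabla u^{(m)}|^{2l}S_{k-l}(u^{(m)})\,dV \;\longrightarrow\; \mu_{k,l}[u] \qquad (0\leq l\leq k).
\end{equation*}
I then simply define $\mu_{k,l,\alpha}[u] := u^{\alpha}\mu_{k,l}[u]$, which is a well-defined Borel measure since $u^{\alpha}$ is continuous and bounded on compacta. For $u\in C^2(\Omega)$ this agrees with $u^{\alpha}|\nabla u|^{2l}S_{k-l}(u)$, giving the first bullet.

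Third, to verify weak convergence, test against $\varphi\in C_c(\Omega)$ with $\mathrm{supp}(\varphi)\subset K\Subset\Omega$ and split
\begin{equation*}
\int \varphi (u^{(m)})^{\alpha}\,d\nu_m^{(l)} = \int \varphi\big[(u^{(m)})^{\alpha}-u^{\alpha}\big]\,d\nu_m^{(l)} + \int \varphi u^{\alpha}\,d\nu_m^{(l)}.
\end{equation*}
The first term tends to $0$ because $\|(u^{(m)})^{\alpha}-u^{\alpha}\|_{L^{\infty}(K)}\to 0$ and the total masses $\nu_m^{(l)}(K)$ are uniformly bounded by Lemma \ref{5.6bound}. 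The second term tends to $\int \varphi u^{\alpha}\,d\mu_{k,l}[u]$ because $\varphi u^{\alpha}\in C_c(\Omega)$ and $\nu_m^{(l)}\rightharpoonup \mu_{k,l}[u]$. This yields the desired weak convergence $\mu_{k,l,\alpha}[u^{(m)}]\rightharpoonup \mu_{k,l,\alpha}[u]$.

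I do not expect a serious obstacle: the corollary is essentially a packaging statement, and the only genuine ingredient beyond Theorem \ref{mainresult} is the uniform positive lower bound on $u^{(m)}$ required for $\alpha<0$. The real work---the monotonicity formula of Theorem \ref{5.6monotonicity}, the weak compactness via Lemma \ref{5.6bound}, and the linear-algebra extraction of individual $|\nabla u|^{2l}S_{k-l}(u)$ from the combinations $\psi_k^{\alpha}$---has already been done, so what remains here is only a standard ``continuous function times weakly convergent measure'' argument carried out on compacta.
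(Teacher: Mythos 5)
Your proposal is correct and follows essentially the same route the paper takes: the paper itself introduces the corollary with ``By the above proof, in fact we have already got,'' meaning it is deduced exactly as you do—reduce local-in-measure to locally uniform convergence via Lemma \ref{6.21.2k>n}, take the weak convergence of each $|\nabla u^{(m)}|^{2l}S_{k-l}(u^{(m)})$ already extracted in Step 2 of Theorem \ref{mainresult}, and re-insert the continuous weight $u^{\alpha}$ using the uniform two-sided bounds on $u^{(m)}$ over compacta together with the uniform total-mass bound from Lemma \ref{5.6bound}. You merely spell out the standard ``uniformly convergent continuous factor times weakly convergent bounded measures'' step that the paper leaves implicit.
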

	Define 
	\begin{equation}
		\begin{aligned}
			\Psi^k(\Omega) &:= \Big\{ u \in C^0(\bar\Omega): u > 0, \text{and there exists a sequence $ \{ u^{(m)} \} \in \tilde\Phi^k(\Omega)$ },\\
			& \text{ such that $u^{(m)} $ converges to $u$ uniformly in $\bar \Omega$ }\Big\}.
		\end{aligned}
	\end{equation}
	Then using Corollary \ref{coro1}, we can extend the result in Corollary \ref{7.22coro} to $\Psi^k(\Omega)$.
		\begin{corollary}\label{7.22coro2}
		If $n > 2k$ and $u, v \in  \Psi^k(\Omega)$ satisfy
		\begin{numcases}{}
			v^{-n}S_{k}(v) \leq u^{-n} S_{k}(u) \text{ in $\Omega$},\\
			v\geq u \text{ on $\partial\Omega$}.	
		\end{numcases}
		Then $u \leq v$ in $\Omega$.
	\end{corollary}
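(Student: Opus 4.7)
The strategy is to approximate $u, v \in \Psi^k(\Omega)$ by smooth sequences $\{u^{(m)}\}, \{v^{(m)}\} \subset \tilde\Phi^k(\Omega)$ converging uniformly on $\bar\Omega$, and then reduce to the already proven smooth comparison principle Corollary \ref{7.22coro}.  Since $u, v > 0$ are continuous on $\bar\Omega$ they are bounded below by a positive constant; the same holds uniformly for the approximations once $m$ is large, so the negative power $w \mapsto w^{-n}$ causes no trouble at any stage.

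First, fix $\varepsilon > 0$ and combine uniform convergence with the boundary inequality $v \geq u$ on $\partial\Omega$ to obtain $v^{(m)} + \varepsilon \geq u^{(m)}$ on $\partial\Omega$ for all $m$ large enough.  Next, interpret the hypothesis $v^{-n}S_k(v) \leq u^{-n}S_k(u)$ in $\Omega$ as a weak inequality between Borel measures.  Both sides are well-defined via Corollary \ref{coro1}; although that corollary is stated under $n<2k$, its proof relies only on locally uniform convergence of the approximating sequences, which is exactly what $\Psi^k(\Omega)$ supplies regardless of the relation between $n$ and $2k$, yielding weak convergence $(u^{(m)})^{-n} S_k(u^{(m)}) \rightharpoonup u^{-n}S_k(u)$ and the analogous statement for $v$.

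Rather than invoke Corollary \ref{7.22coro} as a black box (which would require the differential inequality to hold pointwise at the smooth level), the proof should imitate the argument of Theorem \ref{7.22comparison} directly.  Assume for contradiction that $\Omega_1 := \{u > v\}$ is nonempty; it is open by continuity.  For each $m$ apply the monotonicity formula Theorem \ref{5.6monotonicity} on the open sets $\Omega_1^{(m)} := \{u^{(m)} > v^{(m)} + \varepsilon\}$ with the specific choice $\alpha = -n$, $b_0 = k$ and $b_l = 0$ for $l \geq 1$ used in the proof of Corollary \ref{7.22coro}, so that the unwanted terms of \eqref{7.22.19} cancel.  Integrating the resulting identity in $t$ along the interpolation $w = (1-t)u^{(m)} + t(v^{(m)} + \varepsilon)$ and passing to the weak limit via the measure convergence above, together with the hypothesis, forces the limiting $S_k$ measure to vanish on $\Omega_1$.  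The concavity of $\sigma_k^{1/k}$ and the identity from the proof of Lemma \ref{5.7sum} then give $|\nabla \log u - \nabla \log v| \equiv 0$ on $\Omega_1$, hence $u \equiv v$ there, contradicting the definition of $\Omega_1$.  Letting $\varepsilon \to 0$ completes the argument.

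The main technical difficulty will be justifying the passage to the weak limit inside each term of the monotonicity formula.  Because the interpolation $w$ couples $u^{(m)}$ with $v^{(m)}$, the mixed Borel measures $w^{\alpha}|\nabla w|^{2l}S^{ij}_{k-l+1}w_i w_j$ and $w^{\alpha}|\nabla w|^{2l}S^{ij}_{k-l}w_i w_j$ do not literally fit the form covered by Corollary \ref{coro1}.  One must either extend the weak continuity statement there to mixed expressions in two functions, or exploit the strictly stronger uniform convergence built into $\Psi^k(\Omega)$ (as opposed to the $L^1_{loc}$ convergence defining $\Phi^k(\Omega)$) to reduce matters to weak convergence of the diagonal quantities together with uniform convergence of the coefficients $w^\alpha$; this is the technical core of the proof.
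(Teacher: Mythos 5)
The paper gives no actual proof here: after stating Corollary \ref{coro1} it simply asserts ``Then using Corollary \ref{coro1}, we can extend the result in Corollary \ref{7.22coro} to $\Psi^k(\Omega)$,'' so there is no detailed argument to compare against. Your overall strategy --- approximate by smooth sequences converging uniformly, work on the set $\Omega_1=\{u>v\}$, run the differential identity from the proof of Theorem~\ref{7.22comparison} with $\alpha=-n$, $b_0=k$, $b_l=0$ for $l\geq 1$, and pass to the limit via the weak continuity that uniform convergence supplies --- is the natural and in all likelihood intended one, and you are right that invoking Corollary~\ref{7.22coro} as a black box fails because the hypothesis $v^{-n}S_k(v)\leq u^{-n}S_k(u)$ is only a measure inequality and need not hold pointwise for the approximants.

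Two points, however, leave the argument genuinely incomplete as written. First, a small misattribution: you say you ``apply the monotonicity formula Theorem~\ref{5.6monotonicity}'' with $\alpha=-n$, but that theorem is only proved for $\alpha\geq 0$; what is actually being applied is the differential identity of Lemma~\ref{6.18lemma2} (equivalently \eqref{7.22.19}) together with the sign conditions \eqref{7.22.20.14}--\eqref{7.22.20.17}, exactly as in the proof of Theorem~\ref{7.22comparison}. Second, and more seriously, the passage to the weak limit is not just ``the technical core'' you can defer --- it is precisely where the proof currently has a hole. The inequality you need after discarding the good-sign terms is of the form
\begin{equation*}
\int_{\Omega_1^{(m)}}\psi_k^{-n}\bigl(v^{(m)}+\varepsilon\bigr)-\int_{\Omega_1^{(m)}}\psi_k^{-n}\bigl(u^{(m)}\bigr)\;\leq\;o(1),
\end{equation*}
but the hypothesis only controls the limiting measures, not the approximating integrals, and weak convergence of Borel measures does not by itself give convergence of integrals over the $m$-dependent open sets $\Omega_1^{(m)}$ (one needs, at minimum, $\mu(\partial\Omega_1)=0$ or a regularization of the domain). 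In addition, the non-boundary mixed terms in \eqref{7.22.19} involve expressions such as $w^{\alpha-1}|\nabla w|^{2l}S_{k-l}^{ij}(w)w_iw_j$ for the interpolant $w$, which are not among the diagonal quantities $u^{\alpha}|\nabla u|^{2l}S_{k-l}(u)$ covered by Corollary~\ref{coro1}; you note this but do not resolve it. One plausible repair is to observe, via Lemma~\ref{5.7sum}, that the interpolants $w^{(m)}=(1-t)u^{(m)}+t v^{(m)}\in\tilde\Phi^k$ converge uniformly to $w=(1-t)u+tv$, so a Corollary~\ref{coro1}-type weak continuity does hold along the $w$-direction; but even this does not cover the $S_{k-l}^{ij}w_iw_j$ terms or the $m$-dependent domains, so those steps still need to be supplied before the proof is complete.
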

	
\section{$L^{\infty}$ estimate}	
	In this section, we will give interior $L^{\infty}$ bound with respect to the $\sigma_k$-Yamabe operator and prove Lemma \ref{6.18Moser}.  We use the idea from Gonz\'alez \cite{MR2263673} to use the Moser iteration, see also Li-Nguyen-Wang	\cite{MR4691488}.

\begin{lemma}\label{6.18Moser}
	If $v \in \Phi^k(B_3)$ , for $1 \le k \le \frac{n}{2}$  there exists a positive constant $C$ which only depends on $n, k$ , such that we have
	\begin{equation}
		\sup_{B_1} v \leq C \| v\|_{L^{1}(B_2)} .
	\end{equation}
\end{lemma}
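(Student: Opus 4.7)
The plan is to carry out a Moser iteration driven by the divergence identity of Lemma \ref{6.19divergence}. By the definition of $\Phi^k(B_3)$ and $L^1_{\mathrm{loc}}$ convergence, it suffices to prove the bound for smooth $v\in\tilde\Phi^k(B_3)$ and then pass to the limit along the approximating sequence. Fix radii $1\le r_1<r_2\le 2$, a cutoff $\eta\in C_c^\infty(B_{r_2})$ with $\eta\equiv 1$ on $B_{r_1}$ and $|\nabla\eta|\le 2/(r_2-r_1)$, a parameter $\alpha>0$ to be iterated, and an integer $p\ge k$. Multiply the identity
$$
\sum_{l=0}^{k-1} a_l \bigl(v^{\alpha+1}|\nabla v|^{2l}S^{ij}_{k-l}v_i\bigr)_j + \sum_{l=0}^{k} b_l v^\alpha |\nabla v|^{2l}S_{k-l} = 0
$$
by $\eta^{2p}$ and integrate by parts over $B_{r_2}$, moving the outer derivative onto $\eta^{2p}$.

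After integration by parts, each generated term has the form $2p\int \eta^{2p-1}\eta_j S^{ij}_{k-l}v_i\, v^{\alpha+1}|\nabla v|^{2l}$. Since $(S^{ij}_{k-l})$ is positive semi-definite by Lemma \ref{2024.7.10lemma1}, I apply Cauchy--Schwarz in this quadratic form to separate the $v$- and $\eta$-gradient factors and then AM--GM with a small parameter $\varepsilon$. Using the matrix identity $vS^{ij}_{k-l}v_{jm}=S_{k-l}\delta_{im}-S^{im}_{k-l+1}+\tfrac12|\nabla v|^2 S^{im}_{k-l}$ already exploited in the proof of Lemma \ref{6.18lemma2}, I bound $S^{ij}_{k-l}v_iv_j$ by a positive combination of $|\nabla v|^2S_{k-l}$ and $S^{ij}_{k-l+1}v_iv_j$, and $S^{ij}_{k-l}\eta_i\eta_j$ by $|\nabla \eta|^2 S_{k-l}$ plus lower-order corrections. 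Summing over $l$ and absorbing the $\varepsilon$-terms into the left-hand side (using $b_l\ge 0$), the surviving $l=k$ contribution $b_k v^\alpha|\nabla v|^{2k}$ gives
$$
\int_{B_{r_1}} v^\alpha|\nabla v|^{2k}\,dx \le \frac{C(n,k)(p+\alpha)^{2k}}{(r_2-r_1)^{2k}}\int_{B_{r_2}} v^{\alpha+2k}\,dx.
$$

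The substitution $w=v^{(\alpha+2k)/(2k)}$ turns this into a $W^{1,2k}$ bound on $\eta^{p/k}w$, after which the Sobolev embedding yields a reverse H\"older inequality
$$
\|v\|_{L^{\chi(\alpha+2k)}(B_{r_1})}\le \bigl[C(\alpha+2k)/(r_2-r_1)\bigr]^{1/(\alpha+2k)}\|v\|_{L^{\alpha+2k}(B_{r_2})},
$$
with a gain factor $\chi>1$: for $n>2k$ one takes $\chi=n/(n-2k)$ via $W^{1,2k}\hookrightarrow L^{2kn/(n-2k)}$, while the borderline case $n=2k$ is handled by replacing $2k$ with a subcritical $q=2k-\delta$ so that $\chi=qn/(2k(n-q))>1$ is still available. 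Iterating along $\beta_j=(2k)\chi^j$ and radii $r_j\downarrow 1$ produces $\|v\|_{L^\infty(B_1)}\le C\|v\|_{L^{2k}(B_r)}$ for any $r\in(1,2]$, with constant controlled by a negative power of $r-1$. Finally, a standard interpolation argument on nested balls --- write $\int v^{2k}\le (\sup v)^{2k-1}\int v$, insert into the $L^\infty$--$L^{2k}$ bound, apply Young's inequality, and iterate via the usual absorption lemma --- upgrades the $L^{2k}$ norm on the right to $L^1$, yielding the desired $\sup_{B_1}v\le C\|v\|_{L^1(B_2)}$. The main obstacle is the bookkeeping in the second paragraph: keeping all constants polynomial in $\alpha$ and checking that after Cauchy--Schwarz the quadratic-form contractions feed back cleanly into the positive combination $\sum b_l v^\alpha|\nabla v|^{2l}S_{k-l}$ so that the bad terms can be absorbed. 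A secondary but genuine subtlety is the critical case $n=2k$, where the ``obvious'' Sobolev step is borderline and a subcritical exponent must be introduced to secure the iteration gain.
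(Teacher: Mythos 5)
Your overall strategy --- Moser iteration driven by the divergence identity of Lemma \ref{6.19divergence}, followed by Sobolev embedding and the standard interpolation/absorption trick to pass from $L^{2k}$ to $L^1$ --- is the same as the paper's. However, the key technical step in your second paragraph contains a genuine error. You claim to bound the quadratic form $S^{ij}_{k-l}v_iv_j$ ``by a positive combination of $|\nabla v|^2 S_{k-l}$ and $S^{ij}_{k-l+1}v_iv_j$'' using the identity $vS^{ij}_{k-l}v_{jm} = S_{k-l}\delta_{im}-S^{im}_{k-l+1}+\tfrac12|\nabla v|^2 S^{im}_{k-l}$. That identity governs contractions of $S^{ij}_{k-l}$ with the \emph{Hessian} $v_{jm}$, not with the gradient $v_j$, so it says nothing about $S^{ij}_{k-l}v_iv_j$; and the proposed bound is dimensionally inconsistent, since $S^{ij}_{k-l}v_iv_j$ is degree $k-l-1$ in the Schouten tensor whereas $|\nabla v|^2 S_{k-l}$ is degree $k-l$. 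What you actually need --- and what the paper uses in \eqref{6.19_8.17equ1} --- is the elementary trace estimate $S^{ij}_{k-l}v_iv_j \le (\mathrm{tr}\,S^{ij}_{k-l})\,|\nabla v|^2 = (n-k+l+1)\,S_{k-l-1}\,|\nabla v|^2$, valid because $(S^{ij}_{k-l})\ge 0$. The point is that this produces $S_{k-l-1}$, and that drop of one index in the symmetric function is exactly what makes the downward induction close: after Cauchy--Schwarz the $\varepsilon$-term $\varepsilon\,v^\alpha|\nabla v|^{2(l+1)}S_{k-l-1}\eta^\delta$ is absorbed into the $b_{l+1}$-weighted term on the left, and the remaining bad term $\varepsilon^{-1}(R-r)^{-2}v^{\alpha+2}|\nabla v|^{2l}S_{k-l-1}\eta^{\delta-2}$ has the same shape with $k\to k-1$, $\alpha\to\alpha+2$, $\delta\to\delta-2$, so $k$ iterations terminate at $v^{\alpha+2k}$. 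Your version of the bound does not set up this descent.

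On the positive side, you correctly flag a subtlety that the paper's proof does not address: when $n=2k$ (allowed by the hypothesis $1\le k\le n/2$) the Sobolev exponent $2kn/(n-2k)$ is undefined, and one must replace $2k$ by a subcritical exponent to secure the iteration gain. That observation is a genuine improvement over the paper's written argument. The $\alpha$-dependence of your iteration constant is weaker than the paper's ($(p+\alpha)^{2k}$ in the numerator instead of $\alpha^{-2k}$), but since Moser iteration only requires sub-exponential growth of the constants this does not break the argument.
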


	\begin{proof}[Proof of Lemma \ref{6.18Moser}]
		By Lemma \ref{6.21.2k>n}, we only need to consider the case that $n \geq 2k$. 
		The proof is based on standard Moser iteration. The proof is similar to the ones in \cite{MR2247857} and  \cite{MR2263673} by Gonz\'{a}lez. 
		By \eqref{6.19b}, we know that when $\alpha + 2k > 0$,
		\begin{equation}
			b_{l} > 0, \text{ for any $0 \leq l \leq k$}.
		\end{equation}
		By Lemma \ref{6.19divergence}, we get
	\begin{equation}
		\begin{aligned}
			&\sum_{l = 0}^{k - 1} a_{l} (v^{\alpha + 1}|\nabla v|^{2l}S^{i j}_{k - l} v_{i})_{j} + \sum_{l = 0}^k b_{l} v^{\alpha}|\nabla v|^{2l}S_{k - l} = 0,\\
			\Rightarrow &\sum_{l = 0}^k b_{l} v^{\alpha}|\nabla v|^{2l}S_{k - l} = -\sum_{l = 0}^{k - 1} a_{l} (v^{\alpha + 1}|\nabla v|^{2l}S^{i j}_{k - l} v_{i})_{j}.
		\end{aligned}
	\end{equation}
	Suppose $\frac{1}{2} \leq r < R \leq 2$. Let cut-off function  $\eta \equiv 1$ in $B_r$ and $\eta \equiv 0$ in $\mathbb R^n \backslash B_R$. So we get when $\alpha > - 2k$ and $\delta > 4k$,	
	\begin{equation}\label{6.19_9.57equ1}
		\begin{aligned}
			\int\sum_{l = 0}^k b_{l} v^{\alpha}|\nabla v|^{2l}S_{k - l}\eta^{\delta} \leq \int \sum_{l = 0}^{k - 1} a_{l} v^{\alpha + 1}|\nabla v|^{2l}S^{i j}_{k - l} v_{i}(\eta^{\delta})_{j}.
		\end{aligned}
	\end{equation}
	Since $S^{ij}_{k - l}$ is nonnegative-definite, it follows that
	for any $0 \leq l \leq k - 1$, 
	\begin{equation}\label{6.19_8.17equ1}
		\begin{aligned}
			&\int a_{l} v^{\alpha + 1}|\nabla v|^{2l}S^{i j}_{k - l} v_{i}(\eta^{\delta})_{j}\\
		\leq &\int C|a_{l}|(R - r)^{-1}v^{\alpha + 1}|\nabla v|^{2l + 1}S_{k - l - 1}\eta^{\delta - 1} .
		\end{aligned}
	\end{equation}
	By Cauchy-Schwarz inequality, we have
	\begin{equation}\label{6.19_9.55equ2}
		\begin{aligned}
			&\int (R - r)^{-1} v^{\alpha + 1}|\nabla v|^{2l + 1}S_{k - l - 1}\eta^{\delta - 1}  \\
		\leq & \int C(R - r)^{-2} v^{\alpha + 2}|\nabla v|^{2l}S_{k - l - 1} \eta^{\delta - 2} + C^{-1}v^{\alpha}|\nabla v|^{2l + 2}S_{k - l - 1}\eta^{\delta} .
		\end{aligned}
	\end{equation}
	Because we are using the method of Moser iteration, it is necessary to explain the case when $\alpha$ tends to $+\infty$. There are two different cases depending on $\alpha$.
	\begin{itemize}
		\item When $-2k < \alpha < 2n$, substitute \eqref{6.19_8.17equ1} and \eqref{6.19_9.55equ2} into \eqref{6.19_9.57equ1}:
	\begin{equation}
		\int\sum_{l = 0}^k  v^{\alpha}|\nabla v|^{2l}S_{k - l}\eta^{\delta} \leq \sum_{l = 0}^{k - 1} C(R - r)^{-2} \int v^{\alpha + 2}|\nabla v|^{2l} S_{k - l - 1} \eta^{\delta - 2}.
	\end{equation}	 
		So by induction, we finally get 	
	\begin{equation}
		 	\int\sum_{l = 0}^k  v^{\alpha}|\nabla v|^{2l}S_{k - l}\eta^{\delta}  \leq C(R - r)^{-2k}\int v^{\alpha + 2k }  \eta^{\delta - 2k} .
	\end{equation}		
		
	\item When $\alpha \geq 2n$,  we have
	\begin{equation}
		\begin{aligned}
			|a_{l}| &=  \frac{(\alpha + n + l)!}{2^{l}\cdot l!(\alpha + n)!} \leq \frac{\alpha^{l}}{ l!} , \text{ for $l = 0, 1, \cdots, k - 1$,}
		\end{aligned}
	\end{equation}
	and
	\begin{equation}
		\begin{aligned}
			b_{l + 1} &=  \Big[(k + l + 1)\alpha + kn + 2k(l + 1) \Big]\cdot \frac{(\alpha + n + l)!}{2^{l + 1}(l + 1)! (\alpha + n)!}\\
			&\geq \frac{(k + l + 1)\alpha^{l + 1}}{2^{l + 1}(l + 1)!}, \text{ for $l = -1, 0, 1,  \cdots, k - 1$.}
		\end{aligned}
	\end{equation}
	In this case, the same as above argument , by \eqref{6.19_9.57equ1} we get 
	\begin{equation}
		\begin{aligned}
				&\int\sum_{l = 0}^k \alpha^{l} v^{\alpha}|\nabla v|^{2l}S_{k - l}\eta^{\delta} \\
			\leq& C\int \sum_{l = 0}^{k - 1} a_{l} v^{\alpha + 1}|\nabla v|^{2l}S^{i j}_{k - l} v_{i}(\eta^{\delta})_{j} \\
			\leq&\sum_{l = 0}^{k - 1} C|a_{l}|\int (R - r)^{-1}v^{\alpha + 1}|\nabla v|^{2l + 1}S_{k - l - 1}\eta^{\delta - 1}\\
			\leq &\sum_{l = 0}^{k - 1} C\alpha^{l}\int (R - r)^{-1}v^{\alpha + 1}|\nabla v|^{2l + 1}S_{k - l - 1}\eta^{\delta - 1}\\
			\leq &\sum_{l = 0}^{k - 1} C\alpha^{l - 1}\int (R - r)^{-2}v^{\alpha + 2}|\nabla v|^{2l}S_{k - l - 1}\eta^{\delta - 2} + C^{-1} \alpha^{l + 1}\int v^{\alpha }|\nabla v|^{2l + 2}S_{k - l - 1}\eta^{\delta },
		\end{aligned}
	\end{equation}
	the last inequality follows from Cauchy-Schwarz inequality.
	Then we deduce that
	\begin{equation}
		\begin{aligned}
			&\int\sum_{l = 0}^k \alpha^{l} v^{\alpha}|\nabla v|^{2l}S_{k - l}\eta^{\delta} 	\leq \sum_{l = 0}^{k - 1} C\alpha^{l - 1}\int (R - r)^{-2}v^{\alpha + 2}|\nabla v|^{2l}S_{k - l - 1}\eta^{\delta - 2}.
		\end{aligned}
	\end{equation}
	By induction, we finally get that
		\begin{equation}
		\begin{aligned}
			&\int\sum_{l = 0}^k \alpha^{l} v^{\alpha}|\nabla v|^{2l}S_{k - l}\eta^{\delta} 	\leq  C\alpha^{ - k}\int (R - r)^{-2k}v^{\alpha + 2k}\eta^{\delta - 2k},
		\end{aligned}
	\end{equation}
	
	\begin{equation}
		\Rightarrow \alpha^{2k} \int  v^{\alpha}|\nabla v|^{2k}\eta^{\delta} \leq C(R - r)^{-2k}\int v^{\alpha + 2k} \eta^{\delta - 2k}.
	\end{equation}
	Let $\delta = 4k$, then
	\begin{align*}
		&\int \left\vert\nabla (v^{\frac{\alpha + 2k}{2k}}\eta^2)\right\vert^{2k} \leq C(R - r)^{-2k}\int v^{\alpha + 2k} \eta^{2k},\\
		\Rightarrow &\| \nabla (v^{\frac{\alpha + 2k}{2k}}\eta^2)\|_{L^{2k}(B_R)} \leq C(R - r)^{-1} \|v^{\frac{\alpha + 2k}{2k}}  \|_{L^{2k}(B_R)}. 
	\end{align*}
	By Sobolev inequality, it holds that
	\begin{align*}
		 &\|v^{\frac{\alpha + 2k}{2k}} \|_{L^{\frac{2k n}{n - 2k}}(B_r)} \leq C_1  \| \nabla (v^{\frac{\alpha + 2k}{2k}}\eta^2)\|_{L^{2k}(B_R)} \leq C(R - r)^{-1} \|v^{\frac{\alpha + 2k}{2k}}  \|_{L^{2k}(B_R)},\\
		 \Rightarrow &\| v\|_{L^{\frac{(\alpha + 2k)n}{n - 2k}}(B_r)} \leq C^{\frac{2k}{\alpha + 2k}} (R - r)^{-\frac{2k}{\alpha + 2k}} \| v\|_{L^{\alpha + 2k}(B_R)}.
	\end{align*}
	Following the proof of Theorem 4.1 in \cite{MR2777537} and using the fact that
	\begin{equation}
		\| v\|_{L^{p_1}(B_1)} \leq C \| v\|_{L^{p_2}(B_1)}, \text{ if $p_2 > p_1 > 0$},
	\end{equation}
	we finally get that
	\begin{equation}
		\sup_{B_1} v \leq C \| v\|_{L^{1}(B_2)} .
	\end{equation}	
		
	\end{itemize}

	\end{proof}
	Now we can give the local uniform bound:
	\begin{lemma}\label{6.17bound}
		Let $u \in \Phi^k(B_3) \cap C^2(B_3)$ . Then for $\alpha + 2k> 0$ there exists a positive constant $C$ which only depends on $n, k, \alpha$ such that
		\begin{equation}
			\int_{B_1} \sum_{l = 0}^k u^{\alpha} |\nabla u|^{2l} S_{k - l}  \leq C \|u\|_{L^1(B_2)}^{2k + \alpha} .
		\end{equation}
	\end{lemma}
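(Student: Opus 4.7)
The plan is to combine the divergence identity of Lemma \ref{6.19divergence} with the $L^{\infty}$ bound of Lemma \ref{6.18Moser}. The divergence identity, tested against a cutoff, yields an integral inequality of the form
$$\int_{B_{1}}\sum_{l=0}^{k} u^{\alpha}|\nabla u|^{2l}S_{k-l}\le C\int_{B_{3/2}} u^{\alpha+2k},$$
and the pointwise $L^{\infty}$ estimate then converts the right-hand side into a power of $\|u\|_{L^{1}(B_{2})}$, which is the claimed bound.

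For the first step I would fix $\eta\in C^{\infty}_{0}(B_{3/2})$ with $\eta\equiv 1$ on $B_{1}$ and $|\nabla\eta|\le C$, multiply the identity of Lemma \ref{6.19divergence} by $\eta^{\delta}$ with $\delta>4k$, and integrate by parts. The hypothesis $\alpha+2k>0$ together with \eqref{6.19b} guarantees $b_{l}>0$ for every $0\le l\le k$, so the left-hand side controls each individual summand $\int u^{\alpha}|\nabla u|^{2l}S_{k-l}\eta^{\delta}$ separately. On the right-hand side, the nonnegativity of $(S^{ij}_{k-l})$ from Lemma \ref{2024.7.10lemma1} and the trace inequality $|S^{ij}_{k-l}\xi_{i}\xi_{j}|\le (n-k+l+1)|\xi|^{2}S_{k-l-1}$ combined with Cauchy--Schwarz produce, exactly as in the first case ($-2k<\alpha<2n$) of the proof of Lemma \ref{6.18Moser}, a splitting into a ``good'' term of type $\int u^{\alpha+2}|\nabla u|^{2l}S_{k-l-1}\eta^{\delta-2}$ and a ``bad'' term of type $\int u^{\alpha}|\nabla u|^{2l+2}S_{k-l-1}\eta^{\delta}$. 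The latter is precisely the $(l+1)$-th summand already on the left, so it is absorbed. What remains is the same inequality with $k$ replaced by $k-1$ and $\alpha$ replaced by $\alpha+2$, and iterating this reduction $k$ times terminates at the displayed integral bound, with $C=C(n,k,\alpha)$.

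For the second step I would invoke Lemma \ref{6.18Moser}, applied after translation and rescaling and combined with a finite cover of $B_{3/2}$ by small balls whose doubles lie inside $B_{2}$, to conclude
$$\sup_{B_{3/2}} u\le C\,\|u\|_{L^{1}(B_{2})}.$$
This is valid for every $k$ since for $n<2k$ the stronger H\"older estimate of Lemma \ref{6.21.2k>n} already yields the same conclusion. Trivially,
$$\int_{B_{3/2}} u^{\alpha+2k}\le |B_{3/2}|\bigl(\sup_{B_{3/2}} u\bigr)^{\alpha+2k}\le C\,\|u\|_{L^{1}(B_{2})}^{\alpha+2k},$$
and coupling this with the integral inequality of step one gives the desired conclusion.

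I do not expect a serious obstacle: the only delicate point is verifying that the inductive absorption really closes after exactly $k$ steps (because each step lowers the index $k-l$ by one and raises the $u$-exponent by two), and this is already the computation carried out inside the proof of Lemma \ref{6.18Moser}, which transfers here verbatim. The role of the hypothesis $\alpha+2k>0$ is limited to ensuring positivity of all the coefficients $b_{l}$ so that each weighted summand sits on the correct side of the inequality.
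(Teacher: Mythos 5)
Your proposal is correct and follows essentially the same route as the paper: test the divergence identity of Lemma \ref{6.19divergence} against a cutoff, use the nonnegativity of $(S^{ij}_{k-l})$ and Cauchy--Schwarz to absorb the gradient terms (valid precisely because $\alpha+2k>0$ makes every $b_{l}$ positive when $n\ge 2k$), and iterate $k$ times to reach $\int u^{\alpha+2k}\eta^{\delta-2k}$. The only presentational difference is that you invoke Lemma \ref{6.18Moser} explicitly to convert $\int u^{\alpha+2k}$ into $\|u\|_{L^1(B_2)}^{\alpha+2k}$, a step the paper leaves implicit, and the paper additionally reorganizes the iteration through the identity $S_{k}=u^{k}\sigma_{k}-\sum_{l\ge 1}c_{l}S_{k-l}|\nabla u|^{2l}$ from Lemma \ref{5.6.sigma}, which amounts to the same bookkeeping.
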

	\begin{proof}[Proof of Lemma \ref{6.17bound}]
		Let cut-off function  $\eta \equiv 1$ in $B_1$ and  $\eta \equiv 0$ in $B_3\backslash B_2$.
		By Lemma \ref{6.19divergence}
		\begin{align*}
			&\int \sum_{l = 0}^k b_{l} u^{\alpha} |\nabla u|^{2l} S_{k - l} \eta^{\delta}\\
			 =& -\int 	\sum_{l = 0}^{k - 1} a_{l} (u^{\alpha + 1}|\nabla u|^{2l}S^{i j}_{k - l} u_{i})_{j}\eta^{\delta}\\
			=& \int 	\sum_{l = 0}^{k - 1} a_{l} u^{\alpha + 1}|\nabla u|^{2l}S^{i j}_{k - l} u_{i}(\eta^{\delta})_{j}\\
			\leq &C  \int 	\sum_{l = 0}^{k - 1}u^{\alpha + 1}|\nabla u|^{2l + 1}S_{k - l - 1}\eta^{\delta - 1}.
		\end{align*}
		Using Cauchy-Schwarz inequality and Lemma \ref{5.6.sigma}, we get
		\begin{align*}
			\Rightarrow &\int \sum_{l = 0}^k b_{l} u^{\alpha} |\nabla u|^{2l} S_{k - l} \eta^{\delta} \leq  C  \int 	\sum_{l = 0}^{k - 1} u^{\alpha + 2} |\nabla u|^{2l} S_{k - l - 1} \eta^{\delta - 2}\\
		\Rightarrow &\int u^{\alpha + k}\sigma_{k}\eta^{\delta}	+ \int \sum_{l = 0}^k b_{l} u^{\alpha} |\nabla u|^{2l} S_{k - l} \eta^{\delta} \leq  C  \int  u^{\alpha + k + 1}  \sigma_{k - 1} \eta^{\delta - 2} + C\int u^{\alpha + 2k}\eta^{\delta - 2k}.
		\end{align*}

		By iteration, we get if $\delta = 4k$,
		\begin{align*}
			\Rightarrow & \int \sum_{l = 0}^k  u^{\alpha} |\nabla u|^{2l} S_{k - l} \eta^{\delta} \leq C \|u\|_{L^1(B_2)}^{2k + \alpha} .
		\end{align*}
	\end{proof}
	
	\section{Weak continuity for $n \geq 2k$.}
	In this section, we will prove 	 Theorem \ref{mainresult6.18} when $n \geq 2k$.
	\begin{proof}[Proof of Theorem \ref{mainresult6.18}]
	We divide the proof into two steps. Suppose $u \in \Phi^k(\Omega), \{ u^{(m)} \} \subset \Phi^k(\Omega) \cap C^2 (\Omega)$ and $u^{(m)} $  converge locally in measure to $u$. 
	\begin{itemize}
		\item Step1: Firstly we will show that for any $\alpha > 0$ large enough, it follows that $\psi_k^{\alpha}(u^{(m)}) \rightarrow \psi_k^{\alpha}(u)$ weakly.
		
		 In this step, we follow the idea of Trudinger-Wang in \cite{MR1726702}. 
		Define
	\begin{equation}
			w = w(x, t) := (1 - t)u^{(m_1)}(x) + t u^{(m_2)}(x),
	\end{equation}
	and let $\eta \equiv 1 $ in $B_r(y)$ and $\eta \equiv 0$ in $\Omega \backslash B_{2r}(y)$ with  $B_{3r}(y) \subset \subset \Omega$.
	Then by Lemma \ref{6.18lemma2}
		\begin{align*}
			&\int_{B_{2r(y)}} \eta \psi^{\alpha}_{k}(u^{(m_2)}) - \eta \psi^{\alpha}_{k}(u^{(m_1)})\\
			 =& \int_0^1 \int_{B_{2r(y)}} \eta \frac{\partial}{\partial t}\psi^{\alpha}_{k}(w)\\
			=&\sum_{l = 0}^k \int_0^1 \int_{B_{2r(y)}} \eta \Bigg\{ b_{l}\Big[w^{\alpha + 1}|\nabla w|^{2l}S_{k - l}^{i j}  (u^{(m_2)} - u^{(m_1)})_{i}\Big]_{j} \\
			& +  2l b_{l}  \Big[w^{\alpha}|\nabla w|^{2l - 2}S^{i j}_{k - l + 1} w_{i}(u^{(m_2)} - u^{(m_1)}) \Big]_{j} \\
			&- (\alpha + n + l + 1)b_{l} \Big[w^{\alpha}|\nabla w|^{2l}S^{i j}_{k - l} w_{i}(u^{(m_2)} - u^{(m_1)}) \Big]_{j} \Bigg\}\\
			=&\sum_{l = 0}^k \int_0^1 \int_{B_{2r(y)}} - b_{l}  w^{\alpha + 1}|\nabla w|^{2l}S_{k - l}^{i j}  (u^{(m_2)} - u^{(m_1)})_{i} \eta_{j} \\
			&-  2l b_{l}   w^{\alpha}|\nabla w|^{2l - 2}S^{i j}_{k - l + 1} w_{i}(u^{(m_2)} - u^{(m_1)}) \eta_{j}\\
				& + (\alpha + n + l + 1)b_{l}  w^{\alpha}|\nabla w|^{2l}S^{i j}_{k - l} w_{i}(u^{(m_2)} - u^{(m_1)})  \eta_{j}\\
			=&\sum_{l = 0}^k \int_0^1 \int_{B_{2r(y)}}  b_{l} \Big[ w^{\alpha + 1}|\nabla w|^{2l}S_{k - l}^{i j}\eta_{j} \Big]_{i}  (u^{(m_2)} - u^{(m_1)}) \\
			&-  2l b_{l}   w^{\alpha}|\nabla w|^{2l - 2}S^{i j}_{k - l + 1} w_{i}(u^{(m_2)} - u^{(m_1)}) \eta_{j}\\
				& + (\alpha + n + l + 1)b_{l}  w^{\alpha}|\nabla w|^{2l}S^{i j}_{k - l} w_{i}(u^{(m_2)} - u^{(m_1)})  \eta_{j}.
		\end{align*}
		By Lemma \ref{2024.7.10lemma1}, we have
		\begin{equation}
			\left\vert\sum_{i}\partial_{i} (S_{k - l}^{i j})\right\vert \leq C w^{-1}|\nabla w| S_{k - l - 1}.
		\end{equation}
		Then by direct cpmputation , we find that
		\begin{equation}\label{2021.7.10.equ1}
			\begin{aligned}
				&\Big[ w^{\alpha + 1}|\nabla w|^{2l}S_{k - l}^{i j}\eta_{j} \Big]_{i}  \\
				=&(\alpha + 1)w^{\alpha}|\nabla w|^{2l}S_{k - l}^{i j}\eta_{j} w_{i} + 2l w^{\alpha + 1}|\nabla w|^{2l - 2}S_{k - l}^{i j}\eta_{j} w_{i m }w_{m} + w^{\alpha + 1}|\nabla w|^{2l}S_{k - l}^{i j}\eta_{ij}\\
				&+ w^{\alpha + 1}|\nabla w|^{2l}\eta_{j} \cdot \partial_{i} (S_{k - l}^{i j})\\
				=&(\alpha + 1)w^{\alpha}|\nabla w|^{2l}S_{k - l}^{i j}\eta_{j} w_{i} + 2l w^{\alpha }|\nabla w|^{2l - 2} S_{k - l}^{i j}\eta_{j}A_{im} w_{m} + l w^{\alpha }|\nabla w|^{2l} S_{k - l}^{i j}\eta_{j} w_{i}  + w^{\alpha + 1}|\nabla w|^{2l}S_{k - l}^{i j}\eta_{ij}\\
				&+ w^{\alpha + 1}|\nabla w|^{2l}\eta_{j} \cdot \partial_{i} (S_{k - l}^{i j})\\
				=&(\alpha + 1)w^{\alpha}|\nabla w|^{2l}S_{k - l}^{i j}\eta_{j} w_{i} + 2l w^{\alpha }|\nabla w|^{2l - 2}\eta_{j} w_{m}(S_{k - l}\delta_{mj} - S^{m j}_{k - l + 1}) + l w^{\alpha }|\nabla w|^{2l} S_{k - l}^{i j}\eta_{j} w_{i}  \\
				&+ w^{\alpha + 1}|\nabla w|^{2l}S_{k - l}^{i j}\eta_{ij} + w^{\alpha + 1}|\nabla w|^{2l}\eta_{j} \cdot \partial_{i} (S_{k - l}^{i j})\\
				\leq &Cw^{\alpha}|\nabla w|^{2l + 1}S_{k - l - 1} + Cl w^{\alpha }|\nabla w|^{2l - 1}S_{k - l}  + C w^{\alpha + 1}|\nabla w|^{2l}S_{k - l - 1},
			\end{aligned}
		\end{equation}
		and
		\begin{equation}
			 w^{\alpha}|\nabla w|^{2l}S^{i j}_{k - l} w_{i}  \eta_{j} \leq C w^{\alpha}|\nabla w|^{2l + 1}S_{k - l - 1} .
		\end{equation}
		So by Cauchy-Schwarz inequality, we get that
		\begin{align*}
			&\int \eta \psi^{\alpha}_{k}(u^{(m_1)}) - \eta \psi^{\alpha}_{k}(u^{(m_2)})\\
			 \leq&\sum_{l = 0}^{k - 1} \int_0^1 \int_{B_{2r(y)}} \Bigg[ C  w^{\alpha}|\nabla w|^{2l + 1}S_{k - l - 1}(u^{(m_2)} - u^{(m_1)})^+  + C w^{\alpha + 1}|\nabla w|^{2l}S_{k - l - 1}(u^{(m_2)} - u^{(m_1)})^{+}  \Bigg] \\
			 \leq&\sum_{l = 0}^{k - 1} \int_0^1 \int_{B_{2r(y)}} \Bigg[  C w^{\alpha + 1}|\nabla w|^{2l}S_{k - l - 1}  (u^{(m_2)} - u^{(m_1)})^{+}  + C  w^{\alpha - 1}|\nabla w|^{2l + 2}S_{k - l - 1}(u^{(m_2)} - u^{(m_1)})^+  \Bigg].
		\end{align*}
		
		Fix $\varepsilon \in (0, 1)$ and $N$ so that for 
		\begin{equation}
			O_{\varepsilon} := \{x \in B_{2r}(y)| |u^{(m_2)} - u^{(m_1)}|> \varepsilon \},
		\end{equation}
		we have $|O_{\varepsilon}| < \varepsilon$ if $m_1, m_2 \geq N$. Besides, we can suppose
		\begin{equation}
			\sup_{m} \int_{B_{3r}(y)} u^{(m)} \leq K,
		\end{equation}
		for some fixed constant K.
		 We then have
		\begin{align*}
			&\int \eta \psi^{\alpha}_{k}(u^{(m_1)}) - \eta \psi^{\alpha}_{k}(u^{(m_2)})\\
			 \leq&\sum_{l = 0}^{k - 1} \int_0^1 \int_{B_{2r(y)}}  C w^{\alpha + 1}|\nabla w|^{2l}S_{k - l - 1}  (u^{(m_2)} - u^{(m_1)} - \varepsilon)^{+}  +  C \varepsilon w^{\alpha + 1}|\nabla w|^{2l}S_{k - l - 1}\\
			 &+ C  w^{\alpha - 1}|\nabla w|^{2l + 2}S_{k - l - 1}(u^{(m_2)} - u^{(m_1)} - \varepsilon)^+ + C\varepsilon  w^{\alpha - 1}|\nabla w|^{2l + 2}S_{k - l - 1} . \\
		\end{align*}
		Using Lemma \ref{6.17bound}, we have
		\begin{align*}
			\int_0^1 \int_{B_{2r(y)}}w^{\alpha + 1}|\nabla w|^{2l}S_{k - l - 1} &\leq CK^{2k + \alpha - 1 } ,
		\end{align*}
		and
		\begin{align*}
			&\int_0^1 \int_{B_{2r(y)}}  w^{\alpha + 1}|\nabla w|^{2l}S_{k - l - 1}  (u^{(m_2)} - u^{(m_1)} - \varepsilon)^{+} \\
			\leq &\int_0^1 \int_{O_{\varepsilon}}  w^{\alpha + 1}|\nabla w|^{2l}S_{k - l - 1}  (u^{(m_2)} - u^{(m_1)} - \varepsilon)^{+} \\
			\leq &CK\int_0^1 \int_{O_{\varepsilon}}  w^{\alpha + 1}|\nabla w|^{2l}S_{k - l - 1} .  \\
		\end{align*}
		Then it follows that for $m_1, m_2 \geq N' \geq N$, for a further constant $N'$ depending on $\varepsilon, r$,
		\begin{equation}
			\int_0^1 \int_{B_{2r(y)}}  w^{\alpha + 1}|\nabla w|^{2l}S_{k - l - 1}  (u^{(m_2)} - u^{(m_1)} - \varepsilon)^{+} \leq C \varepsilon . 
		\end{equation}
		So far, we obtain that 
		\begin{equation}
			\int \eta \psi^{\alpha}_{k}(u^{(m_1)}) - \eta \psi^{\alpha}_{k}(u^{(m_2)}) \leq C\varepsilon.
		\end{equation}
		This means that $\psi^{\alpha}_k(u^{(m)})$ converges weakly to a Borel measure  in $\Omega$.
		\item Step2: 
		Note that 
		\begin{align*}
			&\int_{B_{2r(y)}} \eta (u^{(m_1)})^{\gamma} \psi^{\alpha}_{k}(u^{(m_1)}) - \eta (u^{(m_2)})^{\gamma} \psi^{\alpha}_{k}(u^{(m_2)})\\
			 =& \int_0^1 \int_{B_{2r(y)}} \eta \frac{\partial}{\partial t}\Big[w^{\gamma}\psi^{\alpha}_{k}(w) \Big]\\
			 =& \int_0^1 \int_{B_{2r(y)}} \eta \Big[w^{\gamma}\frac{\partial}{\partial t}\psi^{\alpha}_{k}(w) + \gamma w^{\gamma - 1}\psi^{\alpha}_{k}(w) \cdot(u^{(m_2)} - u^{(m_1)})\Big].\\
		\end{align*}
		For any $\gamma + \alpha + 2k > 1$, by the same arguments as Step1, we get $(u^{(m)})^{\gamma}\psi^{\alpha}_k(u^{(m)})$ converges weakly to a Borel measure  in $\Omega$. So we 
		get the weak convergence of the Borel measure  
		\begin{equation}
			\varphi_{k}^{\alpha}(u) : = \sum_{l = 0}^{k} b_{l} (\alpha)  |\nabla u|^{2l}S_{k - l}(u).
		\end{equation}
		Recall \eqref{6.19b}:
		\begin{equation}
			\begin{aligned}
				b_{l + 1} &=  \Big[(k + l + 1)\alpha + kn + 2k(l + 1) \Big]\cdot \frac{(\alpha + n + l)!}{2^{l + 1}(l + 1)! (\alpha + n)!},
			\end{aligned}
		\end{equation}
		with $0 \leq l \leq k - 1$ and $b_{0} = k.$ We find that for any $0 \leq l \leq k$, $b_{l} \approx C_{l} \alpha^{l}$ . So there always exist $\alpha_0, \alpha_1, \cdots , \alpha_{k} > 0$, such that 
		\begin{equation}
			\det (H_{i j}) \neq 0,
		\end{equation}
		where 
		\begin{equation}
			H_{i j} = b_j(\alpha_i), \text{ with $i, j = 0, \cdots, k$.}
		\end{equation}
		So for any $0 \leq l \leq k$, we get 
		the weak convergence of the Borel measure  
		\begin{equation}
			|\nabla u|^{2l}S_{k - l}(u).
		\end{equation}

	\end{itemize}

	\end{proof}

	By the above proof and combining Corollary \ref{coro1} , in fact we have already got:	
	\begin{corollary}
		For any $u \in \Phi^k(\Omega)$, $\alpha > 1 - 2k$ , $0 \leq l \leq k$ , there exists a Borel measure $\mu_{k, l, \alpha}[u]$ in $\Omega$ such that 
		\begin{itemize}
			\item $\mu_{k, l, \alpha}[u] = u^{\alpha} |\nabla u|^{2l}S_{k - l} (u)$ for $u \in C^2(\Omega)$.
			\item If $\{ u^{(m)} \}$ is a sequence in $\Phi^k(\Omega)$ converging locally in measure to a function $u \in \Phi^k(\Omega)$, then the corresponding measure $\mu_{k, l, \alpha}[u^{(m)}] \rightarrow \mu_{k, l, \alpha}[u]$ weakly.
		\end{itemize}
	\end{corollary}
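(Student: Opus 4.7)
The plan is to reduce this statement to the weak convergence of the combined quantities $u^{\gamma}\psi_k^{\alpha}(u)$ already established in Theorem \ref{mainresult6.18}, and then invert a small linear system to separate the components. When $n<2k$, Corollary \ref{coro1} already delivers the conclusion for any $\alpha\in\mathbb R$, so it suffices to handle $n\geq 2k$. For this regime, Step 2 of the proof of Theorem \ref{mainresult6.18} produces, for any sufficiently large $\alpha$ and any $\gamma$ with $\gamma+\alpha+2k>1$, the weak convergence of
\begin{equation*}
(u^{(m)})^{\gamma}\psi_k^{\alpha}(u^{(m)}) \;=\; \sum_{j=0}^{k} b_j(\alpha)\,(u^{(m)})^{\gamma+\alpha}|\nabla u^{(m)}|^{2j}S_{k-j}(u^{(m)}).
\end{equation*}

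Given a target exponent $\alpha^{*}>1-2k$ and a fixed $l\in\{0,\ldots,k\}$, the next step is to pick $k+1$ large parameters $\alpha_0<\alpha_1<\cdots<\alpha_k$ and set $\gamma_i:=\alpha^{*}-\alpha_i$. The constraint $\gamma_i+\alpha_i+2k=\alpha^{*}+2k>1$ then holds, so the measures $(u^{(m)})^{\gamma_i}\psi_k^{\alpha_i}(u^{(m)})$ converge weakly for each $i$. Because the common sum of exponents $\gamma_i+\alpha_i=\alpha^{*}$ is independent of $i$, the $k+1$ limit identities take the uniform form
\begin{equation*}
\sum_{j=0}^{k} b_j(\alpha_i)\,u^{\alpha^{*}}|\nabla u|^{2j}S_{k-j}(u),\qquad i=0,1,\ldots,k.
\end{equation*}
Using the asymptotic $b_j(\alpha)\sim C_j\alpha^{j}$ as $\alpha\to\infty$ (as noted in Step 2 of Theorem \ref{mainresult6.18}), the matrix $H_{ij}:=b_j(\alpha_i)$ is, up to diagonal rescaling, a Vandermonde matrix; hence for a generic choice of large $\alpha_0,\ldots,\alpha_k$ its determinant is nonzero. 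Inverting $H$ disentangles the sum and yields weak convergence of each $u^{\alpha^{*}}|\nabla u|^{2j}S_{k-j}(u)$, in particular the case $j=l$.

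The main obstacle has already been overcome inside Theorem \ref{mainresult6.18} itself: the Moser iteration in Lemma \ref{6.18Moser}, the integral bound in Lemma \ref{6.17bound}, and the monotonicity-type identity in Lemma \ref{6.18lemma2} are what allow one to push $(u^{(m)})^{\gamma}\psi_k^{\alpha}(u^{(m)})$ past the convergence-in-measure hypothesis. Once those tools are in hand, the present corollary is a routine linear-algebra disentanglement. The only genuine subtlety is matching the parameter ranges: one must verify that the range $\gamma+\alpha+2k>1$ of validity for the Step 2 argument admits the choice $\gamma_i=\alpha^{*}-\alpha_i$ with $\alpha_i$ arbitrarily large, which collapses exactly to the hypothesis $\alpha^{*}>1-2k$ stated in the corollary.
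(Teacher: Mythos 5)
Your proposal is correct and follows essentially the same route the paper takes: it reads Step 2 of the proof of Theorem \ref{mainresult6.18} as producing weak convergence of $(u^{(m)})^{\gamma}\psi_k^{\alpha}(u^{(m)})$ under the constraint $\gamma+\alpha+2k>1$ with $\alpha$ large, then chooses $\gamma_i=\alpha^{*}-\alpha_i$ so the combined exponent is the target $\alpha^{*}$, and inverts the Vandermonde-type matrix $H_{ij}=b_j(\alpha_i)$ (nonsingular for distinct large $\alpha_i$ since $b_j$ is a degree-$j$ polynomial in $\alpha$ with nonzero leading coefficient) to separate the individual components $u^{\alpha^{*}}|\nabla u|^{2l}S_{k-l}(u)$. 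This is precisely the content the paper compresses into its one-line remark before the corollary, together with the $n<2k$ case already handled by Corollary \ref{coro1}; your note that the range $\gamma_i+\alpha_i+2k>1$ collapses to $\alpha^{*}>1-2k$ correctly explains where the stated threshold comes from.
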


\bibliography{measure}	

\begin{thebibliography}{10}

\bibitem{MR0431287}
Thierry Aubin.
\newblock \'equations diff\'erentielles non lin\'eaires et probl\`eme de
  {Y}amabe concernant la courbure scalaire.
\newblock {\em J. Math. Pures Appl. (9)}, 55(3):269--296, 1976.

\bibitem{MR1945280}
Sun-Yung~A. Chang, Matthew~J. Gursky, and Paul Yang.
\newblock An a priori estimate for a fully nonlinear equation on
  four-manifolds.
\newblock volume~87, pages 151--186. 2002.
\newblock Dedicated to the memory of Thomas H. Wolff.

\bibitem{MR1923964}
Sun-Yung~A. Chang, Matthew~J. Gursky, and Paul~C. Yang.
\newblock An equation of {M}onge-{A}mp\`ere type in conformal geometry, and
  four-manifolds of positive {R}icci curvature.
\newblock {\em Ann. of Math. (2)}, 155(3):709--787, 2002.

\bibitem{MR2911884}
Qiuyi Dai, Neil~S. Trudinger, and Xu-Jia Wang.
\newblock The mean curvature measure.
\newblock {\em J. Eur. Math. Soc. (JEMS)}, 14(3):779--800, 2012.

\bibitem{MR3436399}
Qiuyi Dai, Xu-jia Wang, and Bin Zhou.
\newblock A potential theory for the {$k$}-curvature equation.
\newblock {\em Adv. Math.}, 288:791--824, 2016.

\bibitem{MR2290138}
Yuxin Ge and Guofang Wang.
\newblock On a fully nonlinear {Y}amabe problem.
\newblock {\em Ann. Sci. \'{E}cole Norm. Sup. (4)}, 39(4):569--598, 2006.

\bibitem{MR2169873}
Mar\'{\i}a del~Mar Gonz\'{a}lez.
\newblock Singular sets of a class of locally conformally flat manifolds.
\newblock {\em Duke Math. J.}, 129(3):551--572, 2005.

\bibitem{MR2247857}
Mar\'ia del~Mar Gonz\'alez.
\newblock Classification of singularities for a subcritical fully nonlinear
  problem.
\newblock {\em Pacific J. Math.}, 226(1):83--102, 2006.

\bibitem{MR2263673}
Mar\'ia del~Mar Gonz\'alez.
\newblock Removability of singularities for a class of fully non-linear
  elliptic equations.
\newblock {\em Calc. Var. Partial Differential Equations}, 27(4):439--466,
  2006.

\bibitem{MR1978409}
Pengfei Guan and Guofang Wang.
\newblock A fully nonlinear conformal flow on locally conformally flat
  manifolds.
\newblock {\em J. Reine Angew. Math.}, 557:219--238, 2003.

\bibitem{MR2373147}
Matthew~J. Gursky and Jeff~A. Viaclovsky.
\newblock Prescribing symmetric functions of the eigenvalues of the {R}icci
  tensor.
\newblock {\em Ann. of Math. (2)}, 166(2):475--531, 2007.

\bibitem{MR2777537}
Qing Han and Fanghua Lin.
\newblock {\em Elliptic partial differential equations}, volume~1 of {\em
  Courant Lecture Notes in Mathematics}.
\newblock Courant Institute of Mathematical Sciences, New York; American
  Mathematical Society, Providence, RI, second edition, 2011.

\bibitem{MR1988895}
Aobing Li and Yanyan Li.
\newblock On some conformally invariant fully nonlinear equations.
\newblock {\em Comm. Pure Appl. Math.}, 56(10):1416--1464, 2003.

\bibitem{MR4691488}
YanYan Li, Luc Nguyen, and Bo~Wang.
\newblock On the {$\sigma_k$}-{N}irenberg problem.
\newblock {\em Amer. J. Math.}, 146(1):241--276, 2024.

\bibitem{MR0788292}
Richard Schoen.
\newblock Conformal deformation of a {R}iemannian metric to constant scalar
  curvature.
\newblock {\em J. Differential Geom.}, 20(2):479--495, 1984.

\bibitem{MR2362323}
Wei-Min Sheng, Neil~S. Trudinger, and Xu-Jia Wang.
\newblock The {Y}amabe problem for higher order curvatures.
\newblock {\em J. Differential Geom.}, 77(3):515--553, 2007.

\bibitem{MR0240748}
Neil~S. Trudinger.
\newblock Remarks concerning the conformal deformation of {R}iemannian
  structures on compact manifolds.
\newblock {\em Ann. Scuola Norm. Sup. Pisa Cl. Sci. (3)}, 22:265--274, 1968.

\bibitem{MR1634570}
Neil~S. Trudinger and Xu-Jia Wang.
\newblock Hessian measures. {I}.
\newblock volume~10, pages 225--239. 1997.
\newblock Dedicated to Olga Ladyzhenskaya.

\bibitem{MR1726702}
Neil~S. Trudinger and Xu-Jia Wang.
\newblock Hessian measures. {II}.
\newblock {\em Ann. of Math. (2)}, 150(2):579--604, 1999.

\bibitem{MR1923626}
Neil~S. Trudinger and Xu-Jia Wang.
\newblock Hessian measures. {III}.
\newblock {\em J. Funct. Anal.}, 193(1):1--23, 2002.

\bibitem{MR3035058}
Neil~S. Trudinger and Wei Zhang.
\newblock Hessian measures on the {H}eisenberg group.
\newblock {\em J. Funct. Anal.}, 264(10):2335--2355, 2013.

\bibitem{zhangwei}
Neil~S. Trudinger and Wei Zhang.
\newblock Weak continuity of the complex {$k$}-{H}essian operators with respect
  to local uniform convergence.
\newblock {\em Bull. Aust. Math. Soc.}, 89(2):227--233, 2014.

\bibitem{MR0125546}
Hidehiko Yamabe.
\newblock On a deformation of {R}iemannian structures on compact manifolds.
\newblock {\em Osaka Math. J.}, 12:21--37, 1960.

\end{thebibliography}
\bibliographystyle{plain}

\end{document}